\def\@cite#1#2{[{{\bfseries #1}\if@tempswa , #2\fi}]}
\renewcommand{\section}{%
\@startsection{section}{1}{\z@}
{0.5truecm plus -1ex minus -.2ex}%
{1.0ex plus .2ex}{\bfseries\large}}
\def\@seccntformat#1{\csname the#1\endcsname.\ }
\numberwithin{equation}{section} 
\theoremstyle{theorem}
\newtheorem{thm}{Theorem}[section]
\newtheorem{cor}[thm]{Corollary}
\newtheorem{lem}[thm]{Lemma}
\theoremstyle{definition}
\newtheorem{remark}{Remark}[section]
\newtheorem*{prth1.1}{Proof of Theorem 1.1}
\newtheorem*{prth2.2}{Proof of Theorem 2.2}
\newtheorem*{prcor1.2}{Proof of Corollary 1.2}
\newtheorem*{prth1.3}{Proof of Theorem 1.3}
\newcommand{\ep}{\varepsilon}
\newcommand{\pa}{\partial}
\newcommand{\Rn}{\mathbb{R}^n}
\newcommand{\R}{\mathbb{R}}
\newcommand{\cl}[1]{{\overline#1}}
\newcommand{\tmax}{T_{\rm max}}
\begin{document}
\footnote[0]{
    2010{\it Mathematics Subject Classification}\/. 
    Primary: 35A01; 
    Secondary: 35Q92, 92C17.
    }
\footnote[0]{
    {\it Key words and phrases}\/:
    chemotaxis; attraction-repulsion; boundedness; nonlinear diffusion; 
    signal-dependent sensitivity
    }
\begin{center} 
    \Large{{\bf 
    Boundedness in a fully parabolic attraction-repulsion chemotaxis system 
    with nonlinear diffusion and signal-dependent sensitivity
    }}%
\end{center}
\vspace{5pt}
\begin{center}
    Yutaro Chiyo, 
    Tomomi Yokota%
      \footnote{Corresponding author.}%
      \footnote{Partially supported by Grant-in-Aid for
      Scientific Research (C), No.\,21K03278.}
    \footnote[0]{
    E-mail: 
    {\tt ycnewssz@gmail.com}, 
    {\tt yokota@rs.tus.ac.jp}
    }\\
    \vspace{12pt}
    Department of Mathematics, 
    Tokyo University of Science\\
    1-3, Kagurazaka, Shinjuku-ku, 
    Tokyo 162-8601, Japan\\
    \vspace{2pt}
\end{center}
\begin{center}    
    \today
\end{center}

\vspace{2pt}
\newenvironment{summary}
{\vspace{.5\baselineskip}\begin{list}{}{%
     \setlength{\baselineskip}{0.85\baselineskip}
     \setlength{\topsep}{0pt}
     \setlength{\leftmargin}{12mm}
     \setlength{\rightmargin}{12mm}
     \setlength{\listparindent}{0mm}
     \setlength{\itemindent}{\listparindent}
     \setlength{\parsep}{0pt}
     \item\relax}}{\end{list}\vspace{.5\baselineskip}}
\begin{summary}
{\footnotesize {\bf Abstract.}
    This paper deals with the quasilinear fully parabolic attraction-repulsion 
    chemotaxis system 
%
\begin{align*}
\begin{cases}
      u_t=\nabla \cdot (D(u)\nabla u)
            -\nabla \cdot (G(u)\chi(v)\nabla v)
            +\nabla\cdot(H(u)\xi(w)\nabla w),
            & x \in \Omega,\ t>0,
    \\[1.05mm]
      v_t=d_1\Delta v+\alpha u-\beta v,
            & x \in \Omega,\ t>0,
    \\[1.05mm]
      w_t=d_2\Delta w+\gamma u-\delta w,
            & x \in \Omega,\ t>0,
\end{cases}
\end{align*}
    under homogeneous Neumann boundary conditions and initial conditions, 
    where $\Omega \subset \Rn$ $(n \ge 1)$ is a bounded domain with
    smooth boundary, 
    $d_1, d_2, \alpha, \beta, \gamma, \delta>0$ are constants. 
    Also, the diffusivity $D$, the density-dependent sensitivities $G, H$ 
    fulfill $D(s)=a_0(s+1)^{m-1}$ with $a_0>0$ and $m \in \R$; 
    $0 \le G(s) \le b_0(s+1)^{q-1}$ with $b_0>0$ and $q<\min\{2,\  m+1\}$; 
    $0 \le H(s) \le c_0(s+1)^{r-1}$ with $c_0>0$ and $r<\min\{2,\  m+1\}$, 
    and the signal-dependent sensitivities $\chi, \xi$ satisfy 
    $0\le\chi(s)\le \frac{\chi_0}{s^{k_1}}$ with $\chi_0>0$ and $k_1>1$; 
    $0\le\xi(s)\le \frac{\xi_0}{s^{k_2}}$ with $\xi_0>0$ and $k_2>1$.
    Global existence and boundedness in the case that $w=0$
    were proved by Ding (J. Math.\ Anal.\ Appl.; 2018;461;1260--1270)
    and Jia--Yang (J. Math.\ Anal.\ Appl.; 2019;475;139--153). 
    However, there is no work on the above fully parabolic 
    attraction-repulsion chemotaxis system with nonlinear diffusion 
    and signal-dependent sensitivity. 
    This paper develops global existence and 
    boundedness of classical solutions to the above system 
    by introducing a new test function.
}
\end{summary}
\vspace{10pt}

\newpage

\section{Introduction} \label{Sec1}

In this paper we consider the fully parabolic attraction-repulsion 
chemotaxis system with nonlinear diffusion and signal-dependent sensitivity, 
%
\begin{align}\label{P}
\begin{cases}
      u_t=\nabla \cdot (D(u)\nabla u)
            -\nabla \cdot (G(u)\chi(v)\nabla v)
            +\nabla\cdot(H(u)\xi(w)\nabla w),
            & x \in \Omega,\ t>0,
    \\[1.05mm]
      v_t=d_1\Delta v+\alpha u-\beta v,
            & x \in \Omega,\ t>0,
    \\[1.05mm]
      w_t=d_2\Delta w+\gamma u-\delta w,
            & x \in \Omega,\ t>0,
    \\[1.5mm]
      \nabla u \cdot \nu=\nabla v \cdot \nu=\nabla w \cdot \nu=0,
            & x \in \pa\Omega,\ t>0,
    \\[1.05mm]
      u(x,0)=u_0(x),\ v(x,0)=v_0(x),\ w(x,0)=w_0(x),
            & x \in \Omega,
\end{cases}
\end{align}
%
where $\Omega \subset \R^n$ ($n \ge 1$) is a bounded domain 
with smooth boundary $\pa \Omega$; 
$\nu$ is the outward normal vector to $\pa \Omega$; 
$d_1, d_2, \alpha, \beta, \gamma, \delta$ are positive constants; 
$D, G, H, \chi, \xi \ge 0$ are known functions of which 
the typical examples are given by $D(u)=(u+1)^{m-1}$, $G(u)=(u+1)^{q-1}$, 
$H(u)=(u+1)^{r-1}$, $\chi(v)=\frac{1}{v^{k_1}}$, $\xi(w)=\frac{1}{w^{k_2}}$,
where $m,q,r \in \R$, $k_1, k_2>1$. 
The initial data $u_0, v_0, w_0$ are supposed to satisfy that 
%
\begin{align}
    &u_0 \in C^0(\cl{\Omega}),\quad u_0 \ge 0\ {\rm in}\ \cl{\Omega}
                                          \quad {\rm and}\quad u_0 \neq 0,
      \label{condiu0}\\
    &v_0 \in W^{1, \infty}(\Omega)\quad {\rm and}
                                               \quad v_0>0\ {\rm in}\ \cl{\Omega},
      \label{condiv0}\\
    &w_0 \in W^{1, \infty}(\Omega)\quad {\rm and}
                                               \quad w_0>0\ {\rm in}\ \cl{\Omega}.
      \label{condiw0}
\end{align}
%

%
In the system \eqref{P}, the function $u$ shows the cell density, 
and the functions $v$ and $w$ represent 
the concentrations of attractive and 
repulsive chemical substances, respectively.
The system \eqref{P} is a generalization of the original attraction-repulsion chemotaxis system 
\begin{align}\label{ARconst}
\begin{cases}
      u_t=\Delta u
            -\chi\nabla\cdot (u\nabla v)
            +\xi\nabla\cdot(u\nabla w),
    \\[1.05mm]
      v_t=d_1\Delta v+\alpha u-\beta v,
    \\[1.05mm]
      w_t=d_2\Delta w+\gamma u-\delta w
\end{cases}
\end{align}
with $\chi, \xi, d_1, d_2, \alpha, \beta, \gamma, \delta>0$, 
which was introduced by Luca et al.~\cite{LCEM-2003} to describe 
the aggregation of microglial cells observed in Alzheimer's disease. 
The first mathematical work on this model was given by 
Tao--Wang~\cite{TW-2013} as will be described later. 
We can also refer to Jin--Wang~\cite{JW-2020} 
for the modeling and mathematical works on this model. 
On the other hand, the 
system~\eqref{P} is one of 
the chemotaxis models proposed by Keller--Segel~\cite{KS-1970} 
(for the variations with comprehensive studies, 
see Hillen--Painter~\cite{HP-2009}, 
Bellomo--Bellouquid--Tao--Winkler~\cite{BBTW-2015}  
and Arumugam--Tyagi~\cite{AT-2021}). 
Here {\it chemotaxis} is the property such that a species reacts on 
some chemical substance and moves towards or 
moves away from this substance. 
In this paper we are especially interested in the case of 
nonlinear diffusion and signal-dependent sensitivity; 
note that a quasilinear generalization of Keller--Segel systems such as 
\eqref{ARconst} was proposed by Painter--Hillen~\cite{HP-2002} 
to show the quorum effect in the chemotactic process. 
In particular, when the system has signal-dependent sensitivity, 
it is biologically meaningful in 
the Weber--Fechner law and it seems to be a mathematically challenging problem to study 
whether the solution remains bounded. 
%

%
%

%
\vspace{2mm}
We first focus on the Keller--Segel system 
with signal-dependent sensitivity,
%
\begin{align*}
\begin{cases}
      u_t=\Delta u
            -\nabla \cdot (u\chi(v)\nabla v),
    \\
      v_t=\Delta v-v+u,
\end{cases}
\end{align*}
%
where $\chi$ is a function. 
In this case global existence and boundedness 
were studied in \cite{A-2019, F-2015, FS-2016-1, FS-2018, FY-2014, Lan-2016, LW-2017, MY-2017, W-2010-2, W-2011, WY-2018}. 
More precisely, when $0<\chi(s) \le \frac{\chi_0}{(1+as)^k}$ for all $s>0$ 
with some $\chi_0>0$, $a>0$, $k>1$, 
Winkler \cite{W-2010-2} derived global existence and boundedness.
When $\chi(s)=\frac{\chi_0}{s}$ and $n \ge 2$, 
Winkler \cite{W-2011} showed global existence of classical solutions 
for $\chi_0<\sqrt{\frac{2}{n}}$, 
and global existence of weak solutions for $\chi_0<\sqrt{\frac{n+2}{3n-4}}$. 
On the other hand, when $0<\chi(s) \le \frac{\chi_0}{s^k}$ for all $s>0$ 
with some $\chi_0>0$, $k>1$ and $n \ge 2$, 
global existence and boundedness were obtained in \cite{FY-2014}. 
Also, Fujie \cite{F-2015} proved boundedness 
under the condition that $\chi(s)=\frac{\chi_0}{s}$ and 
$0<\chi_0<\sqrt{\frac{2}{n}}$. 
Moreover, in the two-dimensional setting, 
Lankeit \cite{Lan-2016} established boundedness 
if $\Omega$ is a convex domain and $\chi(s)=\frac{\chi_0}{s}$ 
for all $\chi_0 \in (0, \chi_0')$ with some $\chi_0'>1$. 
The condition for $\chi_0$ was relaxed by 
Lankeit--Winkler \cite{LW-2017} in a novel type of generalized solution framework. 
When $0 \le \chi(s) \le \frac{\chi_0}{(b+s)^k}$ 
for all $s>0$ with some small $\chi_0>0$ and 
$b \ge 0$, $k \ge 1$, 
boundedness of classical solutions was presented in~\cite{MY-2017}. 
Ahn~\cite{A-2019} improved the smallness condition for $\chi_0$ 
assumed in~\cite{MY-2017}, and showed 
stabilization (see also~\cite{WY-2018}). 
In the case that $\chi$ is a general function, 
global existence and boundedness of classical solutions 
were obtained in 
Fujie--Senba~\cite{FS-2016-1, FS-2018}. 
Particularly, in the two-dimensional setting, 
under the condition that 
$\chi>0$ fulfills $\lim_{s \to \infty}\chi(s)=0$, 
boundedness for small $\tau>0$ was shown in~\cite{FS-2016-1}. 
Moreover, when $\tau>0$ is sufficiently large, and 
$\chi$ satisfies that $\lim_{s \to \infty}\chi(s)=0$ if $n=2$ and 
$\limsup_{s \to \infty}s\chi(s)<\frac{n}{n-2}$ if $n \ge 3$, 
boundedness was proved 
in~\cite{FS-2018}. 
%

%
\vspace{2mm}
We next review the quasilinear Keller--Segel system with 
signal-dependent sensitivity,
%
\begin{align*}
\begin{cases}
      u_t=\nabla \cdot (D(u)\nabla u)
            -\nabla \cdot (G(u)\chi(v)\nabla v),
    \\[1.05mm]
      v_t=\Delta v-v+u,
\end{cases}
\end{align*}
%
where $D, G, \chi$ are functions. 
In the case that $\chi(s) \equiv 1$ and 
$\Omega$ is a convex domain, 
global existence and boundedness were showed 
in Tao--Winkler \cite{TW-2012} 
under the condition that 
$K_0(s+1)^{m-1} \le D(s) \le K_1(s+1)^{M-1}$ 
for all $s \ge 0$ with some $K_0, K_1>0$, $m, M \ge 1$ and 
$\frac{G(s)}{D(s)} \le K(s+1)^a$ 
for all $s\ge0$ with some $K>0$, $a<\frac{2}{n}$; 
note that the convexity of $\Omega$ 
was removed by \cite{ISY-2014}. 
On the other hand, when $\chi(s)=\frac{1}{s}$, under the condition that 
$K_0(s+1)^{m-1} \le D(s) \le K_1(s+1)^{M-1}$ 
for all $s \ge 0$ with $K_0, K_1>0$, $m, M \in \R$ and 
$\frac{G(s)}{D(s)} \le K(s+1)^a$ for all $s\ge0$ 
with some $K>0$, $a<\frac{2}{n}$, 
global existence and boundedness were established in \cite{FNY-2015}. 
However, the optimality of the condition $a<\frac{2}{n}$ 
was remained as an open problem. 
After that, by introducing a fractional type of test function, 
Ding \cite{D-2018} solved the open problem, that is, 
proved global existence and boundedness under 
the condition that $a<1$ and $m \le 1$.
Moreover, the problem in the case $m>1$ was solved by Jia--Yang \cite{JY-2019} 
under a differential condition. 
These mean that the signal-dependent sensitivity benefits 
global existence and boundedness of solutions. 
%

%
\vspace{2mm}
We now turn our eyes to the quasilinear attraction-repulsion chemotaxis system 
%
\begin{align*}
\begin{cases}
      u_t=\nabla \cdot (D(u)\nabla u)
            -\nabla \cdot (G(u)\chi(v)\nabla v)
            +\nabla\cdot(H(u)\xi(w)\nabla w),
    \\[1.05mm]
      \tau_1 v_t=\Delta v+\alpha u-\beta v,
    \\[1.05mm]
      \tau_2w_t=\Delta w+\gamma u-\delta w,
\end{cases}
\end{align*}
%
where $\alpha, \beta, \gamma, \delta>0$, $\tau_1, \tau_2 \ge 0$ and 
$D, G, H, \chi, \xi$ are functions. 
In the case that $D, \chi, \xi$ are constants 
and $G(s)=s$, $H(s)=s$ 
as well as $\tau_1=\tau_2=0$, Tao--Wang~\cite{TW-2013} established 
the first result on global existence and boundedness 
under the condition $\chi\alpha-\xi\gamma<0$; 
moreover, the authors proved finite-time blow-up 
by assuming $\chi\alpha-\xi\gamma>0$, $\beta=\delta$ and 
$\int_\Omega u_0>\frac{8\pi}{\chi\alpha-\xi\gamma}$ 
in the two-dimensional setting. 
Also, in the case that $D, \chi, \xi$ are constants 
and $G(s)=s$, $H(s)=s$ as well as $\tau_1=\tau_2=1$, 
Jin--Wang~\cite{JW-2020} derived
global existence and boundedness, and stabilization under 
the condition $\frac{\xi\gamma}{\chi\alpha} \ge C$ with some $C > 0$ 
in the two-dimensional setting. 
In addition, $D, \chi, \xi$ are constants and $\tau_1=\tau_2=1$, 
global existence and boundedness were studied in 
\cite{FS-2019, J-2015, LT-2015}; 
note that the transformation $z:=\chi v -\xi w$ is effective in this case. 
In the case that $\chi, \xi$ are constants, $\tau_1=1, \tau_2=0$ and $n \ge 2$, 
Lin--Mu--Gao \cite{LMG-2016} proved global existence and boundedness 
under the condition that $D(s)>0$ for all $s \ge 0$, 
$D(s) \ge \frac{a}{s^k}$ for all $s>0$ and all $k<\frac{2}{n}-1$ 
with some $a>0$ as well as $G(s)=s$, $H(s)=s$. 
Also, Li--Mu--Lin--Wang \cite{LMLW-2017} established global existence 
and boundedness under the following two conditions:
\begin{itemize}
\item[(i)] $\tau_1=\tau_2=0$, $G(s)=s$, $H(s)=s$ and 
$D(s) \ge as^{b-1}$ for all $s>0$ with some $a \in (0, 1]$, $b>2-\frac{2}{n}$
as well as $0<\chi(s) \le \frac{\chi_0}{s^k}$, $0<\xi(s)\le \frac{\xi_0}{s^\ell}$ 
for all $s>0$ with some $\chi_0, \xi_0>0$, $k \ge 1$, $\ell \ge 2$.
\item[(ii)] $\tau_1=1, \tau_2=0$, 
$G(s)=s$, $H(s)=s^r$ with $r \ge 2$ and 
$D(s) \ge as^{b-1}$ for all $s>0$ with some $a>0$, $b>r-\frac{2}{n}$ 
as well as $\chi(s) \equiv \chi_0$, $\xi(s)=\frac{\xi_0}{s}$ 
with some $\chi_0, \xi_0>0$.
\end{itemize}
In these literatures, the results were successfully obtained 
by using estimates for $v, w$ from below 
and by reducing the case that $\chi, \xi$ are constants. 
On the other hand, in the case of linear diffusion and normal sensitivity 
that $D(s) \equiv 1$, $G(s)=s$, $H(s)=s$, 
global existence and boundedness in the system with 
$\tau_1=\tau_2=1$ were proved in \cite{CMY-2020} 
by the method using a test function defined as a combination of 
an exponential function and integrals of $\chi, \xi$. 
However, since the proof in \cite{CMY-2020} strongly depends on  
$|(u+1)^{m-1}\nabla u|^2=(u+1)^{m-1}|\nabla u|^2$ 
(this holds true only in the case $m=1$!), 
the method does not work in the case 
$m \neq 1$. 
%

%
\vspace{2mm}
In summary, the system \eqref{P} with signal-dependent 
sensitivity has been studied in the following 
two restrictive cases: 
the first case that $D, G, H$ satisfy algebraic growth or decay conditions  
in the parabolic--elliptic--elliptic or parabolic--parabolic--elliptic version; 
the second case that $D(s) \equiv 1$, $G(s)=s$, $H(s)=s$ 
in the fully parabolic version. 
Especially,  recalling the case $w=0$, 
we know that the condition $a<\frac{2}{n}$ 
in \cite{FNY-2015} was removed 
in \cite{D-2018, JY-2019}. 
Therefore it is expected that even in \cite{LMLW-2017, LMG-2016}, 
the conditions are described without using 
the value $\frac{2}{n}$. 
The purpose of this paper is to establish 
global existence and boundedness of classical solutions to \eqref{P} 
under some conditions, independent of 
the dimension $n$, for algebraic growth or decay orders 
among $D, G, H$. 

%
\vspace{2mm}
In order to state the main theorem, 
we introduce conditions for the diffusivity $D$, 
the density-dependent sensitivities $G, H$ 
and the signal-dependent sensitivities $\chi, \xi$. 
We suppose that the functions $D, G, H$ satisfy
%
\begin{align}
    &D \in C^2([0, \infty)),\quad D(s)=a_0(s+1)^{m-1}
                                    \quad (a_0>0,\ m \in \R),\label{condiD}\\
    &G \in C^2([0, \infty)),\quad 0 \le G(s) \le b_0(s+1)^{q-1}
                                    \quad (b_0>0,\ q<\min\{2,\ m+1\}),\label{condiG}\\
    &H \in C^2([0, \infty)),\quad 0 \le H(s) \le c_0(s+1)^{r-1}
                                    \quad (c_0>0,\ r<\min\{2,\ m+1\}),\label{condiH}
\end{align}
%
and assume that the functions $\chi, \xi$ fulfill
%
\begin{align}
    &\chi \in C^{1+\vartheta_1}_{{\rm loc}}((0, \infty))\ 
      (0<\vartheta_1<1),\quad
      0\le\chi(s) \le \frac{\chi_0}{s^{k_1}}\quad 
      (\chi_0>0,\ k_1>1),\label{condichi}\\
    &\xi \in C^{1+\vartheta_2}_{{\rm loc}}((0, \infty))\ 
      (0<\vartheta_2<1),\quad
      0\le\xi(s) \le \frac{\xi_0}{s^{k_2}}\quad 
      (\xi_0>0,\ k_2>1).\label{condixi}
\end{align}
%
Then the main result reads as follows.
%
\begin{thm}\label{mainThm}
Let\/ $\Omega \subset \R^n$ $(n \ge 1)$ be a bounded domain 
with smooth boundary and let $d_1, d_2,  \alpha, \beta, \gamma, \delta>0$. 
Suppose that $D, G, H, \chi, \xi$ fulfill \eqref{condiD}--\eqref{condixi}. 
Then for all $(u_0, v_0, w_0)$ satisfying \eqref{condiu0}--\eqref{condiw0} 
there exists a unique triplet $(u, v, w)$ of nonnegative functions
%
\begin{align*}
    &u, v, w \in C^0(\cl{\Omega} \times [0, \infty)) 
                     \cap C^{2, 1}(\cl{\Omega} \times (0, \infty)),
\end{align*}
%
which solves \eqref{P} in the classical sense, and is bounded in the sense that
\begin{align*}
    &\|u(\cdot, t)\|_{L^\infty(\Omega)} \le C
\end{align*}
%
for all $t>0$ with some $C>0$. 
\end{thm}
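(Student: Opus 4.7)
The approach follows the standard framework for boundedness problems in chemotaxis: combine local existence and an extensibility criterion with a uniform a priori estimate of $u$ in some $L^p(\Omega)$ with $p$ large enough, and then upgrade to $L^\infty$ by a Moser-type iteration. Since $\chi,\xi$ may blow up near zero, I would first set up an approximate system in which $\chi(v),\xi(w)$ are replaced by $\chi(v+\ep),\xi(w+\ep)$ (or by suitable smooth truncations of $\chi,\xi$ near the origin), apply quasilinear parabolic theory to obtain local-in-time classical nonnegative solutions $(\uep,\vep,w_\ep)$ on a maximal interval $[0,T_{\max,\ep})$, prove estimates uniform in $\ep$, and pass to the limit $\ep\searrow 0$. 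Strict positivity of $\vep,w_\ep$ on any compact time interval, which is what allows $\chi(\vep)\le\chi_0\vep^{-k_1}$ and $\xi(w_\ep)\le\xi_0 w_\ep^{-k_2}$ to be used later, follows from the maximum principle applied to the $\vep$- and $w_\ep$-equations together with $\uep\ge 0$ and the strict positivity of the data.

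The mass identity $\lp{1}{\uep(\cdot,t)}=\lp{1}{u_0}$ is immediate from testing the first equation against $1$. The heart of the proof is to promote this to an $L^p(\Omega)$-bound for $p$ large enough by introducing a new, product-type test function of the form
\[
\phiep := (\uep+1)^{p-1}\,\vep^{-\sigma_1}\,w_\ep^{-\sigma_2},
\]
in which $\sigma_1,\sigma_2>0$ are tuned (in terms of $p,k_1,k_2$) so that the singular factors $\vep^{-k_1}$, $w_\ep^{-k_2}$ produced by $\chi(\vep),\xi(w_\ep)$ in the two drift integrals combine cleanly with the negative powers of $\vep,w_\ep$ already present in $\phiep$. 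Differentiating $\tfrac{1}{p}\int_\Omega(\uep+1)^p\vep^{-\sigma_1}w_\ep^{-\sigma_2}$ in time, substituting the second and third equations to eliminate $\partial_t\vep$ and $\partial_t w_\ep$, and integrating by parts produces the favourable dissipation
\[
C\!\int_\Omega(\uep+1)^{p+m-3}\vep^{-\sigma_1}w_\ep^{-\sigma_2}|\nabla \uep|^2,
\]
together with nonnegative dissipations in $|\nabla\vep|^2$ and $|\nabla w_\ep|^2$ coming from the $d_1\Delta\vep$, $d_2\Delta w_\ep$ terms, plus mixed terms in $\nabla\uep\!\cdot\!\nabla\vep$, $\nabla\uep\!\cdot\!\nabla w_\ep$ and $\nabla\vep\!\cdot\!\nabla w_\ep$. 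These mixed terms can be absorbed by the three nonnegative dissipations via Young's inequality precisely when $q<\min\{2,m+1\}$ and $r<\min\{2,m+1\}$; a Gronwall argument then yields a uniform-in-$\ep$ bound on $\lp{p}{\uep(\cdot,t)}$.

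Once an $L^p$-bound for some large $p$ is available, standard heat-semigroup smoothing applied to the $\vep$- and $w_\ep$-equations gives bounds on $\vep,w_\ep$ in $W^{1,\infty}(\Omega)$, after which an Alikakos--Moser iteration on the first equation delivers the $L^\infty$-bound for $\uep$. The extensibility criterion for the approximate problem then forces $T_{\max,\ep}=\infty$. Passing to the limit $\ep\searrow 0$ by uniform parabolic H\"older estimates and standard compactness arguments produces a global classical solution $(u,v,w)$ of \eqref{P} with the announced uniform bound, and uniqueness is handled by a routine energy-type comparison.

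The main obstacle is the $L^p$-estimate itself. The fully parabolic nature of \eqref{P} precludes the elliptic elimination of $v,w$ exploited in \cite{LMG-2016,LMLW-2017}; the coexistence of two singular sensitivities $\chi(v),\xi(w)$ with opposite signs rules out the linearising substitution $z=\chi v-\xi w$ that was available for constant $\chi,\xi$ in \cite{FS-2019,J-2015,LT-2015}; and when $m\ne 1$ the identity $|(u+1)^{(m-1)/2}\nabla u|^2=(u+1)^{m-1}|\nabla u|^2$ that made the argument of \cite{CMY-2020} work is no longer useful. Selecting the triple $(p,\sigma_1,\sigma_2)$ in $\phiep$ so that every cross term can be \emph{simultaneously} absorbed by the three nonnegative dissipations under the single pair of conditions $q<\min\{2,m+1\}$, $r<\min\{2,m+1\}$ is the delicate balancing act at the heart of the argument, and is precisely what the new test function is designed to make possible.
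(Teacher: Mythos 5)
Your overall skeleton (local existence, positivity of $v,w$, a weighted $L^p$ functional with negative powers of $v$ and $w$, Gagliardo--Nirenberg/Moser iteration) matches the paper's, and you correctly sense that a product-type test function is the new ingredient. However, the central step is asserted rather than proved, and as stated it would fail. You claim that for the \emph{single} functional $\int_\Omega(u+1)^{p-1}v^{-\sigma_1}w^{-\sigma_2}$ all cross terms can be absorbed by the three dissipation integrals via Young's inequality ``precisely when $q,r<\min\{2,m+1\}$.'' Check the exponents: the attraction cross term carries the weight $(u+1)^{p+q-3}v^{-k_1-\sigma_1}w^{-\sigma_2}|\nabla u||\nabla v|$, and splitting it against the $u$-dissipation $(u+1)^{p+m-3}|\nabla u|^2$ leaves $(u+1)^{p+2q-m-3}|\nabla v|^2$, which the available $|\nabla v|^2$-dissipation (with $u$-weight $(u+1)^{p-1}$) absorbs only if $q\le \frac m2+1$ --- strictly stronger than $q<\min\{2,m+1\}$ when, say, $m=1$ and $q=1.9$. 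This is exactly the difficulty the paper is built around: one needs a \emph{linear combination} of four differently weighted functionals, $\int(u+1)^{p-m+1}$, $\int(u+1)^{p-\eta}v^{-(2k_1+\sigma_1-2)}$, $\int(u+1)^{p-\eta}w^{-(2k_2+\sigma_2-2)}$ and the product functional $\int(u+1)^{p-\eta}v^{-(2k_1+\sigma_3-2)}w^{-(2k_2+\sigma_4-2)}$ (with $\sigma_1,\sigma_2>0$ but $\sigma_3,\sigma_4<0$), \emph{plus} $\int v^2+\int w^2$, together with an interpolation step that writes $(u+1)^{p-2(m-q+1)}$ as a convex combination of $(u+1)^{p-\eta}$ and $(u+1)^0$ and dumps the leftover $\int|\nabla v|^2$ into the $L^2$-energy of $v$. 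Omitting the $\int v^2,\int w^2$ components and the multi-functional structure is not a presentational shortcut; the bookkeeping genuinely does not close otherwise under the stated hypotheses.

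A second, independent gap: you derive positivity of $v_\ep,w_\ep$ ``on any compact time interval'' from the maximum principle. That only gives $v\ge e^{-\beta t}\min v_0$, which decays to $0$, so the constants in every subsequent Young/absorption step (all of which use $v\ge\mu_1$, $w\ge\mu_2$) would degenerate as $t\to\infty$ and the conclusion $\sup_{t>0}\|u(\cdot,t)\|_{L^\infty}<\infty$ would not follow. One needs the time-\emph{uniform} lower bounds of Fujie's lemma (Lemma 2.2 of the paper), which exploit the conserved positive mass $\int_\Omega u=\int_\Omega u_0>0$ as a source in the $v$- and $w$-equations. Relatedly, ``a Gronwall argument'' does not by itself give a time-uniform $L^p$ bound; one must manufacture an absorptive term $-cy^{\kappa}$ via Gagliardo--Nirenberg and mass conservation to obtain $y'+cy^{\kappa}\le C$. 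The $\ep$-regularization of $\chi,\xi$ you propose is unnecessary here (the paper works directly with the classical solution and its uniform lower bounds), but it is not wrong per se.
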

%

\begin{cor}
Let $\xi=0$. 
Suppose that $D, G, \chi$ fulfill 
\eqref{condiD}, \eqref{condiG}, \eqref{condichi}, 
respectively. 
Then for all $(u_0, v_0)$ satisfying \eqref{condiu0} and \eqref{condiv0} 
there exists a unique classical solution $(u,v)$ 
which is bounded.
%
\end{cor}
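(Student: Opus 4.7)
The plan is to deduce the corollary directly from Theorem~\ref{mainThm} by interpreting the hypothesis $\xi=0$ as a degenerate instance of the full three-component system~\eqref{P}. The key observation is that whenever $\xi$ vanishes identically the flux $\nabla\cdot(H(u)\xi(w)\nabla w)$ disappears from the $u$-equation, so both $H$ and the datum $w_0$ become auxiliary objects that can be chosen at will. Consequently the corollary should not require any new analytical input beyond the main theorem.

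Concretely, I would first pick $\xi\equiv 0$, which lies in $C^{1+\vartheta_2}_{\rm loc}((0,\infty))$ and trivially satisfies $0\le\xi(s)\le \xi_0/s^{k_2}$ for every admissible $\xi_0>0$ and $k_2>1$, so that \eqref{condixi} holds. Next I would pick any $H\in C^2([0,\infty))$ compatible with \eqref{condiH} (for instance $H\equiv 0$, which is the most economical choice) together with any $w_0\in W^{1,\infty}(\Omega)$ strictly positive on $\cl{\Omega}$, say $w_0\equiv 1$, so that \eqref{condiw0} is met. Applying Theorem~\ref{mainThm} to the enlarged initial data $(u_0,v_0,w_0)$ then produces a unique nonnegative triple $(u,v,w)\in C^0(\cl\Omega\times[0,\infty))\cap C^{2,1}(\cl\Omega\times(0,\infty))$ solving~\eqref{P} in the classical sense with $\|u(\cdot,t)\|_{L^\infty(\Omega)}\le C$ for all $t>0$. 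Because $\xi\equiv 0$ kills the repulsion term, the pair $(u,v)$ simultaneously solves the reduced two-equation system with the same regularity and the same uniform bound on $u$.

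It remains to transfer the uniqueness statement from the three-equation system to the two-equation system. Given any other classical solution $(\tilde u,\tilde v)$ of the reduced system, I would extend it by solving the linear parabolic problem $\tilde w_t=d_2\Delta\tilde w+\gamma\tilde u-\delta\tilde w$ with Neumann boundary conditions and initial datum $w_0\equiv 1$; classical parabolic theory provides a unique $\tilde w\in C^0(\cl\Omega\times[0,\infty))\cap C^{2,1}(\cl\Omega\times(0,\infty))$. The triple $(\tilde u,\tilde v,\tilde w)$ then solves \eqref{P} with the same ingredients, and the uniqueness clause of Theorem~\ref{mainThm} forces $(\tilde u,\tilde v,\tilde w)=(u,v,w)$, hence $(\tilde u,\tilde v)=(u,v)$.

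There is no genuine obstacle here; the only point that requires a moment of care is that Theorem~\ref{mainThm} as stated requires the full hypotheses \eqref{condiH} and \eqref{condixi}. Since these are trivially satisfied by the constant zero function on the repulsive side, the reduction is clean and produces both existence and uniqueness of the bounded classical solution $(u,v)$ claimed in the corollary.
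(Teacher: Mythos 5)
Your proposal is correct and is essentially the argument the paper intends (the corollary is stated without proof precisely because it is the immediate specialization of Theorem~\ref{mainThm} obtained by taking $\xi\equiv 0$, an admissible $H$, and an arbitrary positive $w_0$). Your extra care in transferring uniqueness back to the two-component system, by extending a competitor $(\tilde u,\tilde v)$ via the linear $w$-equation and invoking the triple's uniqueness, is a valid way to close that small gap.
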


\begin{remark}
The above corollary improves a previous result. 
Indeed, the condition 
$q<\frac{5-m}{2}$ ($m>1$) in \cite{JY-2019} is relaxed to $q<\min\{2,\ m+1\}$ ($m \in \R$). 
\end{remark}

%
The strategy for the proof of Theorem \ref{mainThm} is 
to show $L^p$-boundedness of $u$. 
In the case that $D(s) \equiv 1$, $G(s)=s$, $H(s)=s$, 
$L^p$-estimate for $u$ was established in \cite{CMY-2020} by deriving 
\begin{align*}
\frac{d}{dt}\int_\Omega u^p f(v, w)
\le c_1\int_\Omega u^p f(v, w)
      -c_2\Big(\int_\Omega u^p f(v, w)\Big)^{1+\vartheta}
\end{align*}
for some constants $c_1, c_2, \vartheta>0$ and 
some function $f \colon \R^2 \to \R$. 
Unfortunately, due to the nonlinearity of $D, G, H$, 
this method does not work in \eqref{P}. 
So, in this paper, we shift our method to that in 
\cite{D-2018} with the use of $\frac{d}{dt}\int_\Omega \frac{(u+1)^{p-m+1}}{v^{2k_1+\sigma_1-2}}$
with suitable $\sigma_1>0$.
However, even if $\frac{d}{dt}\int_\Omega \frac{(u+1)^{p-m+1}}{w^{2k_2+\sigma_2-2}}$ ($\sigma_2>0$) is added, the parallel method does not work, 
because some 
terms with the product of $v, w$ appear 
in the denominator. 
More precisely, combining $\frac{d}{dt}\int_\Omega (u+1)^{p-m+1}$, 
$\frac{d}{dt}\int_\Omega \frac{(u+1)^{p-m+1}}{v^{2k_1+\sigma_1-2}}$, 
$\frac{d}{dt}\int_\Omega \frac{(u+1)^{p-m+1}}{w^{2k_2+\sigma_2-2}}$, 
$\frac{L}{2d_1}\frac{d}{dt}\int_\Omega v^2$ and $\frac{M}{2d_2}\frac{d}{dt}\int_\Omega w^2$, 
we have several good terms such as 
\begin{align}\label{good}
-c_3\int_\Omega (u+1)^{p-2}|\nabla u|^2,\quad 
-c_4\int_\Omega \frac{(u+1)^{p-m-1}}{v^{2k_1+\sigma_1}}|\nabla v|^2, \quad 
-L\int_\Omega |\nabla v|^2
\end{align}
with some $c_3, c_4>0$ and sufficiently large $L>0$, 
and a lot of terms such as
\begin{align*}
&I_1:=\int_\Omega \frac{(u+1)^{p+q-m-2}}{v^{3k_1+\sigma_1-2}}|\nabla u||\nabla v|, \quad 
I_2:=\int_\Omega \frac{(u+1)^{p+q-\eta-3}}{v^{k_1}w^{2k_2+\sigma_2-2}}|\nabla u||\nabla v|.
\end{align*}
Using the estimate $v(x,t) \ge \mu_1$ 
with some $\mu_1>0$ (see \eqref{infv}), 
we can estimate $I_1$ as
\begin{align}
&I_1\le \ep_1\int_\Omega (u+1)^{p-2}|\nabla u|^2
        +\ep_2
          \int_\Omega \frac{(u+1)^{p-m+1}}
                             {v^{2k_1+\sigma_1}}
                            |\nabla v|^2
        +c_5
          \int_\Omega |\nabla v|^2 \label{strategyI1}
\end{align}
with small $\ep_1, \ep_2>0$ and some $c_5>0$, and hence all terms on the right-hand side of this inequality 
can be dominated by the good terms in \eqref{good}. 
On the other hand, using the estimate $w(x,t) \ge \mu_2$ 
with some $\mu_2>0$ (see \eqref{infw}), 
we can similarly estimate $I_2$ as 
\begin{align}
&I_2\le \ep_3\int_\Omega (u+1)^{p-2}|\nabla u|^2
        +\ep_4
          \int_\Omega \frac{(u+1)^{p-m+1}}
                             {v^{2k_1}}
                            |\nabla v|^2.
\end{align}
However, the second term on the right-hand side cannot be estimated 
by the good terms in \eqref{good}, 
because $\frac{1}{v^{2k_1}}\ (=\frac{v^{\sigma_1}}{v^{2k_1+\sigma_1}})$ cannot be estimated by 
$\frac{1}{v^{2k_1+\sigma_1}}$ due to the lack of the upper estimate for $v$. 
Thus we will overcome this difficulty by introducing a new test function 
with the product of $v, w$ in the denominator, 
that is, $\int_\Omega \frac{(u+1)^{p-\eta}}
                             {v^{2k_1+\sigma_3-2}w^{2k_2+\sigma_4-2}},$ 
where $\sigma_3, \sigma_4<0$, and $\eta>0$ will be fixed later. 
%

%
\vspace{2mm}
This paper is organized as follows. 
In Section \ref{Sec2} we collect some preliminary facts about 
local existence in \eqref{P}, the lower bounds for $v, w$, and  
the weighted Young inequality which will be employed frequently later. 
In Section \ref{Sec3} we mainly derive two differential inequalities  needed to 
prove global existence and boundedness (Theorem \ref{mainThm}).
%


\section{Preliminaries} \label{Sec2}

We first introduce a reasonable result on local existence of classical solutions to 
\eqref{P}, which can be proved by standard arguments 
based on the contraction mapping principle 
(see e.g., \cite{TW-2011} for nonlinear diffusion; 
\cite{FWY-2014} for signal-dependent sensitivity).

%
\begin{lem}\label{LoEx}
Let\/ $\Omega \subset \R^n$ $(n \ge 1)$ be a bounded domain 
with smooth boundary and let $d_1, d_2, \alpha, \beta, \gamma, \delta>0$. 
Assume that $D, G, H, \chi, \xi$ satisfy 
\eqref{condiD}--\eqref{condixi}. 
Then for all $(u_0, v_0, w_0)$ fulfilling \eqref{condiu0}--\eqref{condiw0} 
there exists $\tmax \in (0, \infty]$ such that 
\eqref{P} possesses a unique classical solution $(u, v, w)$ such that
%
\begin{align*}
    &u, v, w \in C^0(\cl{\Omega} \times [0, \tmax)) 
                     \cap C^{2, 1}(\cl{\Omega} \times (0, \tmax)).
\end{align*}
%
Moreover, 
%
\begin{align*}
   {\it if}\ \tmax<\infty,\quad 
   {\it then\ either}\ 
   \limsup_{t \nearrow \tmax}\|u(\cdot, t)\|_{L^\infty(\Omega)}
                  =\infty\ {\it or}\ 
\liminf_{t \nearrow \tmax}\inf_{x \in \Omega}
v(\cdot, t)=0,
\end{align*}
%
and $\int_\Omega u(\cdot, t)=\int_\Omega u_0$
%
%
for all $t \in (0, \tmax)$.
\end{lem}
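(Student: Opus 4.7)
The plan is to invoke the Banach fixed-point principle on a closed ball in a suitable Banach space of Hölder-regular triples on a short time interval, in the spirit of \cite{TW-2011, FWY-2014}. The crucial preliminary observation is that, by \eqref{condiv0} and \eqref{condiw0}, the initial data satisfy
\[
\mu_0 := \min\Bigl\{\min_{\overline{\Omega}} v_0,\ \min_{\overline{\Omega}} w_0\Bigr\} > 0,
\]
so that $\chi(v), \xi(w)$ and their derivatives restrict to smooth, uniformly bounded functions on the set where $v, w \ge \mu_0/2$. This is the mechanism by which the singularities of $\chi$ and $\xi$ at $0$ do not obstruct the contraction argument on a sufficiently short time interval.

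Concretely, I would work in the complete metric set
\[
S_T := \bigl\{(\widetilde u, \widetilde v, \widetilde w) \in X_T : \|(\widetilde u, \widetilde v, \widetilde w)\|_{X_T} \le R,\ \widetilde v \ge \tfrac{\mu_0}{2},\ \widetilde w \ge \tfrac{\mu_0}{2}\bigr\},
\]
where $X_T$ is a closed subspace of $C^0(\overline{\Omega} \times [0,T])^3$ with appropriate parabolic Hölder regularity, and define a map $\Phi$ sending $(\widetilde u, \widetilde v, \widetilde w)$ to the solution $(u,v,w)$ of the three linearised (now decoupled) parabolic problems, with coefficients $D(\widetilde u)$, $G(\widetilde u)\chi(\widetilde v)$, $H(\widetilde u)\xi(\widetilde w)$ and source terms $\alpha \widetilde u - \beta v$, $\gamma \widetilde u - \delta w$. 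Parabolic Schauder theory for the linear $v$- and $w$-equations and the quasilinear Hölder theory for the $u$-equation provide the required regularity of $\Phi(\widetilde u, \widetilde v, \widetilde w)$, while the maximum principle combined with $v_0, w_0 \ge \mu_0$ preserves the lower bounds $v, w \ge \mu_0/2$ once $T$ is sufficiently small. For such $T$ depending on $R$, $\mu_0$, and the data, one verifies the self-map property $\Phi(S_T) \subset S_T$ and Lipschitz contractivity of $\Phi$ in a weaker norm, producing the unique local solution; its extension to a maximal interval $[0, T_{\max})$ is then standard.

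The extension dichotomy follows by contradiction: if $T_{\max} < \infty$ while both $\|u(\cdot,t)\|_{L^\infty(\Omega)}$ remained bounded and $\inf_\Omega v(\cdot,t)$ stayed bounded away from zero on $[0,T_{\max})$, then the $v$- and $w$-equations are linear heat equations with $L^\infty$-bounded sources, yielding via standard parabolic regularity uniform $W^{1,\infty}$-bounds on $v, w$ together with a uniform positive lower bound on $w$; plugging this back into the quasilinear $u$-equation produces uniform Hölder bounds on $(u,v,w)$ up to $T_{\max}$, permitting a restart at $t = T_{\max}$ that contradicts maximality. The mass identity is immediate: integrating the first equation of \eqref{P} over $\Omega$ and invoking the homogeneous Neumann boundary conditions annihilates all three flux terms through the divergence theorem, giving $\frac{d}{dt}\int_\Omega u = 0$.

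The main technical obstacle is verifying the self-map property and contractivity of $\Phi$ in the presence of the singular sensitivities $\chi(v) \lesssim v^{-k_1}$ and $\xi(w) \lesssim w^{-k_2}$; this is resolved precisely by the uniform positivity $v, w \ge \mu_0/2$ enforced on $S_T$, which converts a priori singular coefficients into Hölder-continuous multipliers on the iteration set and reduces the construction to a standard quasilinear local existence argument.
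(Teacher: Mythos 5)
Your proposal follows exactly the route the paper itself indicates: a contraction-mapping/Banach fixed-point construction on a short time interval in the spirit of \cite{TW-2011, FWY-2014}, with the strict positivity of $v_0, w_0$ taming the singular sensitivities, the standard restart argument for the extensibility criterion, and integration of the first equation for mass conservation. The paper gives no further detail than this citation, so your sketch is consistent with and essentially identical to its intended proof.
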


In the following we suppose that $\Omega \subset \R^n$ $(n \ge 1)$ 
is a bounded domain with smooth boundary, 
$d_1, d_2, \alpha, \beta, \gamma, \delta>0$ and $D, G, H, \chi, \xi$ fulfill 
\eqref{condiD}--\eqref{condixi} as well as $(u_0, v_0, w_0)$ satisfies 
\eqref{condiu0}--\eqref{condiw0}. 
Then we denote by $(u, v, w)$ the local classical solution of \eqref{P} 
given in Lemma \ref{LoEx} and by $\tmax$ its maximal existence time. 
We next present the result on the lower bounds for $v, w$, 
which was obtained in \cite[Lemma 2.2]{F-2015} 
(see also \cite[Lemma 2.1 and Remark 2.2]{MY-2017}).

%
\begin{lem}\label{LB}
Assume that $(u, v, w)$ is the local classical solution of \eqref{P}. 
Then there exist constants $\mu_1, \mu_2>0$ such that
%
\begin{align}
   &\inf_{x \in \Omega} v(x, t) \ge \mu_1,\label{infv}\\
   &\inf_{x \in \Omega} w(x, t) \ge \mu_2\label{infw}
\end{align}
%
for all $t \in (0, \tmax)$.
\end{lem}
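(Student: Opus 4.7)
The plan is to exploit the Duhamel representation for the linear inhomogeneous parabolic problem satisfied by $v$ (and symmetrically by $w$), together with the mass conservation for $u$ furnished by Lemma~\ref{LoEx} and the uniform positivity of the Neumann heat kernel away from $t=0$. Since the equations for $v$ and $w$ have identical structure, it suffices to establish \eqref{infv}; then \eqref{infw} follows by replacing $(d_1,\alpha,\beta,v_0)$ with $(d_2,\gamma,\delta,w_0)$.

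Writing $v$ through the Neumann heat semigroup $(e^{\tau \Delta_N})_{\tau \ge 0}$ on $\Omega$,
\begin{align*}
v(\cdot,t)
= e^{-\beta t}\, e^{d_1 t \Delta_N} v_0
  + \alpha \int_0^t e^{-\beta(t-s)}\, e^{d_1(t-s)\Delta_N} u(\cdot,s)\,ds,
\end{align*}
one sees that both summands are nonnegative: the first because $v_0 \ge \min_{\cl{\Omega}} v_0 > 0$ and the semigroup is order-preserving, the second because $u \ge 0$. I would then split the time axis at a fixed threshold $t_0>0$ and get a positive lower bound in each regime.

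For $t \in (0, t_0]$ only the first term is needed: by the weak maximum principle for the Neumann heat equation, $e^{d_1 t \Delta_N} v_0 \ge \min_{\cl{\Omega}} v_0$, so $v(x,t) \ge e^{-\beta t_0} \min_{\cl{\Omega}} v_0$ throughout $\cl{\Omega}\times(0,t_0]$. For $t>t_0$ the second term is used together with the mass conservation $\int_\Omega u(\cdot,s) = \int_\Omega u_0 > 0$ from Lemma~\ref{LoEx}. The key ingredient is the existence of a constant $c_0>0$ with
\begin{align*}
p_{d_1}(\tau,x,y) \ge c_0 \qquad \text{for all } \tau \ge t_0,\ x,y \in \cl{\Omega},
\end{align*}
where $p_{d_1}$ denotes the Neumann heat kernel; this follows from the strict positivity $p_{d_1}(t_0,\cdot,\cdot)>0$ on $\cl{\Omega}\times\cl{\Omega}$ (strong maximum principle) combined with the semigroup identity $p_{d_1}(\tau,x,y) = \int_\Omega p_{d_1}(\tau - t_0, x, z)\, p_{d_1}(t_0, z, y)\,dz$ and mass conservation $\int_\Omega p_{d_1}(\cdot,x,z)\,dz \equiv 1$. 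With this bound in hand, for $t \ge 2t_0$,
\begin{align*}
v(x,t)
\ge \alpha c_0 \|u_0\|_{L^1(\Omega)} \int_0^{t-t_0} e^{-\beta(t-s)}\,ds
\ge \frac{\alpha c_0 \|u_0\|_{L^1(\Omega)} \bigl(e^{-\beta t_0} - e^{-2\beta t_0}\bigr)}{\beta} > 0,
\end{align*}
which is independent of $t$. Setting $\mu_1$ to be the minimum of the two lower bounds handles both regimes.

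The main obstacle is really just the uniform positivity of the Neumann heat kernel for times bounded away from zero; once that is secured the rest is essentially algebraic. Crucially, no upper estimate on $u$ is invoked --- which is the whole point of the lemma, namely to supply the a priori positivity of $v$ and $w$ on which the later, more delicate nonlinear $L^p$ estimates for $u$ will depend.
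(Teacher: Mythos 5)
Your argument is correct and is essentially the one in the literature: the paper itself gives no proof of Lemma \ref{LB}, citing instead \cite[Lemma 2.2]{F-2015} (see also \cite[Lemma 2.1 and Remark 2.2]{MY-2017}), and the proof there is exactly your combination of the variation-of-constants formula, a uniform pointwise lower bound for the Neumann heat kernel at times $\ge t_0$, and the mass conservation $\int_\Omega u(\cdot,s)=\int_\Omega u_0>0$. One bookkeeping slip: your two regimes $(0,t_0]$ and $[2t_0,\infty)$ leave $(t_0,2t_0)$ uncovered, but this is harmless since the first estimate $v(x,t)\ge e^{-\beta t}\min_{\cl{\Omega}}v_0\ge e^{-2\beta t_0}\min_{\cl{\Omega}}v_0$ holds verbatim on all of $(0,2t_0]$.
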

%

We finally recall the following weighted Young inequality 
which will be used frequently later.

%
\begin{lem}\label{Young}
Let $p, q \in (1, \infty)$ satisfy $\frac{1}{p}+\frac{1}{q}=1$. 
Then for all $a, b \ge 0$ and all $\ep>0$, the inequality
%
\begin{align*}
    ab \le \ep^p\,\frac{a^p}{p}+\frac{1}{\ep^q}\frac{b^q}{q}
\end{align*}
%
holds.
\end{lem}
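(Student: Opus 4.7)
The plan is to reduce the weighted inequality to the unweighted (classical) Young inequality by a scaling trick. Recall that for conjugate exponents $p,q\in(1,\infty)$ with $\frac{1}{p}+\frac{1}{q}=1$ and for all $x,y\ge 0$ one has the standard Young inequality
\begin{align*}
xy \le \frac{x^p}{p}+\frac{y^q}{q},
\end{align*}
which itself is an immediate consequence of the concavity of the logarithm (equivalently, the weighted AM--GM inequality applied to $x^p$ and $y^q$ with weights $\frac{1}{p}$ and $\frac{1}{q}$), or alternatively of the convexity of the exponential function applied to $\log(x^p)$ and $\log(y^q)$.

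To introduce the parameter $\ep>0$, I would simply substitute $x:=\ep a$ and $y:=\ep^{-1}b$ into the classical inequality. Then the left-hand side becomes $xy=(\ep a)(\ep^{-1}b)=ab$, independent of $\ep$, while the right-hand side becomes $\frac{(\ep a)^p}{p}+\frac{(\ep^{-1}b)^q}{q}=\ep^p\,\frac{a^p}{p}+\frac{1}{\ep^q}\,\frac{b^q}{q}$, which is precisely the asserted bound. The nonnegativity assumption on $a,b$ guarantees that $x,y\ge 0$ so that the classical form is applicable.

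The only step of substance is the proof of the unweighted inequality itself; once that is in hand, the weighted version is a one-line rescaling. Since the classical Young inequality is completely standard, there is no real obstacle here, and the lemma can be regarded as a convenient restatement that will be invoked repeatedly in the subsequent differential-inequality arguments of Section~\ref{Sec3} with $\ep$ chosen small in order to absorb "bad" terms into the good dissipative terms listed in \eqref{good}.
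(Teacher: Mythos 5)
Your proof is correct; the paper states this lemma without proof as a standard recalled fact, and your rescaling $x:=\ep a$, $y:=\ep^{-1}b$ applied to the classical Young inequality is exactly the standard argument one would supply.
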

%


\section{Proof of Theorem \ref{mainThm}} \label{Sec3}

In this section we mainly derive two differential inequalities 
which lead to $L^p$-estimate for $u$. 
The first one is given by the following lemma. 

%
\begin{lem}\label{difineq1}
Assume that 
%
\begin{align}
    &p>\max\{1,\ m,\  2(m-q+1),\ 2(m-r+1)\},\label{condip}\\
    &\eta<\min\{2(m-q+1),\ 2(m-r+1)\}.\label{condieta}
\end{align}
%
Then for all $\ep_{01}, \ep_{02}>0$ there exist constants
$C_1=C_1(b_0, p, q, m, k_1, \mu_1, \ep_{01})>0$ and 
$C_2=C_2(c_0, p, r, m, k_2, \mu_2, \ep_{02})>0$ such that
%
\begin{align}\label{DI1}
    &\frac{d}{dt}\int_\Omega (u+1)^{p-m+1}
        +\frac{a_0(p-m)(p-m+1)}{2}\int_\Omega (u+1)^{p-2}|\nabla u|^2
        \notag\\
    &\quad\,
        \le \ep_{01}\int_\Omega\frac{|\nabla v|^2}{v^{2k_1+\sigma_1}}(u+1)^{p-\eta}
        +\ep_{02}\int_\Omega\frac{|\nabla w|^2}{w^{2k_2+\sigma_2}}(u+1)^{p-\eta}
        \notag\\
    &\quad\,\quad\,
        +C_1\int_\Omega |\nabla v|^2+C_2\int_\Omega |\nabla w|^2
\end{align}
%
for all $t \in (0, \tmax)$, where 
$\sigma_1:=\frac{2k_1(2m-2q-\eta+2)}{p-2(m-q+1)}>0$ and 
$\sigma_2:=\frac{2k_2(2m-2r-\eta+2)}{p-2(m-r+1)}>0$.
\end{lem}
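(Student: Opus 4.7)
The plan is to test the first equation of \eqref{P} with the weight $(p-m+1)(u+1)^{p-m}$, so that the time derivative on the left-hand side becomes $\frac{d}{dt}\int_\Omega (u+1)^{p-m+1}$. Integrating by parts under the homogeneous Neumann boundary condition and using $D(u)=a_0(u+1)^{m-1}$, the diffusion contribution produces the principal dissipation term
\[
-a_0(p-m+1)(p-m)\int_\Omega (u+1)^{p-2}|\nabla u|^2,
\]
which has the correct sign because $p>m$ by \eqref{condip}. I will split this in half, keeping $\frac{a_0(p-m)(p-m+1)}{2}\int_\Omega (u+1)^{p-2}|\nabla u|^2$ on the left-hand side and reserving the other half to absorb cross terms.

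The remaining boundary-integrated terms are $(p-m+1)(p-m)\int_\Omega (u+1)^{p-m-1}G(u)\chi(v)\nabla u\cdot\nabla v$ and its $w$-analogue. Applying the pointwise bounds from \eqref{condiG} and \eqref{condichi}, the first is dominated by a constant multiple of $\int_\Omega (u+1)^{p-m+q-2}\frac{|\nabla u||\nabla v|}{v^{k_1}}$. I split the factor $(u+1)^{p-m+q-2}$ symmetrically as $(u+1)^{(p-2)/2}\cdot(u+1)^{(p+2q-2m-2)/2}$ and invoke Lemma \ref{Young} with conjugate exponents $2,2$; the $(u+1)^{p-2}|\nabla u|^2$ piece is swept into the reservoir with arbitrarily small prefactor, leaving an error term proportional to
\[
\int_\Omega (u+1)^{p+2q-2m-2}\frac{|\nabla v|^2}{v^{2k_1}}.
\]

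To convert this error into the target form I apply Lemma \ref{Young} a second time to the algebraic identity
\[
(u+1)^{p+2q-2m-2}\frac{|\nabla v|^2}{v^{2k_1}}
= \left[\frac{(u+1)^{p-\eta}|\nabla v|^2}{v^{2k_1+\sigma_1}}\right]^{\theta}
  \cdot\bigl[|\nabla v|^2\bigr]^{1-\theta}.
\]
Matching the powers of $(u+1)$, $|\nabla v|$, and $v$ forces $\theta=\frac{p+2q-2m-2}{p-\eta}$ and $\sigma_1=\frac{2k_1(2m-2q-\eta+2)}{p-2(m-q+1)}$, precisely the value stated in the lemma. The hypotheses \eqref{condip}--\eqref{condieta} guarantee simultaneously that $\theta\in(0,1)$ (so Young applies with conjugate exponents $1/\theta$ and $1/(1-\theta)$) and that $\sigma_1>0$; weighted Young then delivers $\ep_{01}\frac{(u+1)^{p-\eta}|\nabla v|^2}{v^{2k_1+\sigma_1}}+C_1|\nabla v|^2$, with $\ep_{01}$ freely prescribable and $C_1$ depending on the stated parameters. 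The $w$-cross term is handled identically, invoking \eqref{condiH} and \eqref{condixi}, yielding the conjugate parameter $\sigma_2$.

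The only delicacy, rather than a genuine obstacle, is the rigid algebraic calibration in this second Young step: the exponents of $(u+1)$, $|\nabla v|$, and $v$ must all line up against the target term at once, which uniquely determines both $\theta$ and $\sigma_1$; the conditions \eqref{condip}--\eqref{condieta} are precisely what is needed to legitimize this choice. I note that the lower bounds $v\ge\mu_1$, $w\ge\mu_2$ from Lemma \ref{LB} are \emph{not} needed here—the decomposition is an exact pointwise identity before Young is invoked—but they will be essential in the companion differential inequality for the new test function $\int_\Omega (u+1)^{p-\eta}v^{-(2k_1+\sigma_3-2)}w^{-(2k_2+\sigma_4-2)}$ sketched in the introduction.
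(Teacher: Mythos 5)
Your proposal is correct and follows essentially the same route as the paper: testing with $(p-m+1)(u+1)^{p-m}$, absorbing the $|\nabla u|$ factors of the two cross terms into half of the diffusion dissipation via a first weighted Young step, and then calibrating a second weighted Young step so that the leftover $(u+1)^{p-2(m-q+1)}v^{-2k_1}|\nabla v|^2$ term splits exactly into $\ep_{01}v^{-(2k_1+\sigma_1)}(u+1)^{p-\eta}|\nabla v|^2$ plus a multiple of $|\nabla v|^2$, which is precisely the paper's computation with $\theta$ replaced by its reciprocal. Your side remark that the lower bounds $v\ge\mu_1$, $w\ge\mu_2$ are not actually needed here is also accurate — with the stated $\sigma_1$ the residual power of $v$ vanishes identically — even though the paper formally cites \eqref{infv} and carries $\mu_1,\mu_2$ in its constants.
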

%

\begin{proof}
Straightforward calculations, integration by parts 
and \eqref{condiD}--\eqref{condixi} yield that
%
\begin{align}\label{DI1-1}
    &\frac{d}{dt}\int_\Omega (u+1)^{p-m+1}\notag\\
    &\quad\,
      =(p-m+1)\int_\Omega (u+1)^{p-m}
        \nabla \cdot [D(u)\nabla u-G(u)\chi(v)\nabla v+H(u)\xi(w)\nabla w]
        \notag\\
    &\quad\,
      ={}-(p-m)(p-m+1)\int_\Omega D(u)(u+1)^{p-m-1}|\nabla u|^2\notag\\
    &\qquad\ \,
        +(p-m)(p-m+1)\int_\Omega G(u)(u+1)^{p-m-1}\chi(v)
          \nabla u \cdot \nabla v\notag\\
    &\qquad\,\ 
        -(p-m)(p-m+1)\int_\Omega H(u)(u+1)^{p-m-1}\xi(w)
          \nabla u \cdot \nabla w\notag\\
    &\quad\,
      \le {}-a_0(p-m)(p-m+1)\int_\Omega (u+1)^{p-2}|\nabla u|^2\notag\\
    &\qquad\,\ 
        +b_0(p-m)(p-m+1)\int_\Omega (u+1)^{p+q-m-2}\frac{\chi_0}{v^{k_1}}
          |\nabla u||\nabla v|\notag\\
    &\qquad\,\ 
        +c_0(p-m)(p-m+1)\int_\Omega (u+1)^{p+r-m-2}\frac{\xi_0}{w^{k_2}}
          |\nabla u||\nabla w|, 
\end{align}
%
where we used the fact $p>m$ (see \eqref{condip}). 
We now estimate the second and third terms 
on the rightmost summand of \eqref{DI1-1}. 
Using Lemma \ref{Young}, we have
%
\begin{align}\label{DI1-2}
    &\int_\Omega (u+1)^{p+q-m-2}\frac{\chi_0}{v^{k_1}}
          |\nabla u||\nabla v|\notag\\
    &\quad\,
      =\int_\Omega \frac{1}{2}\Big(\frac{a_0}{b_0}\Big)^{\frac{1}{2}}
                           (u+1)^\frac{p-2}{2}|\nabla u| \cdot 
                           2\chi_0\Big(\frac{b_0}{a_0}\Big)^{\frac{1}{2}}
                           \frac{|\nabla v|}{v^{k_1}}(u+1)^{\frac{p-2(m-q+1)}{2}}
      \notag\\
    &\quad\,
      \le \frac{a_0}{4b_0}\int_\Omega (u+1)^{p-2}|\nabla u|^2
           +\frac{b_0\chi_0^2}{a_0}\int_\Omega
             \frac{|\nabla v|^2}{v^{2k_1}}(u+1)^{p-2(m-q+1)}.
\end{align}
%
Since $\theta:=\frac{p-\eta}{p-2(m-q+1)}>1$ due to \eqref{condieta}, 
it follows from Lemma \ref{Young} and \eqref{infv} that 
for all $\widetilde{\ep_{01}}>0$, 
%
\begin{align}\label{DI1-3}
    &\frac{b_0\chi_0^2}{a_0}\int_\Omega
             \frac{|\nabla v|^2}{v^{2k_1}}(u+1)^{p-2(m-q+1)}
      \notag\\
    &\quad\,
      =\frac{b_0\chi_0^2}{a_0}\int_\Omega
             \frac{|\nabla v|^\frac{2}{\theta}}{v^{\frac{2k_1+\sigma_1}{\theta}}}
             (u+1)^{p-2(m-q+1)} \cdot
             \frac{|\nabla v|^{2-\frac{2}{\theta}}}
                    {v^{2k_1-\frac{2k_1+\sigma_1}{\theta}}}
      \notag\\
    &\quad\,
      \le \widetilde{\ep_{01}}\int_\Omega\frac{|\nabla v|^2}{v^{2k_1+\sigma_1}}
           (u+1)^{p-\eta}
           +c_1\int_\Omega |\nabla v|^2
\end{align}
%
holds with $\sigma_1:=\frac{2k_1(2m-2q-\eta+2)}{p-2(m-q+1)}>0$ 
and $c_1=c_1(p, q, m, k_1, \mu_1, \widetilde{\ep_{01}})>0$. 
Thus, combining \eqref{DI1-2} and \eqref{DI1-3}, we see that
%
\begin{align}\label{DI1-4}
    &\int_\Omega (u+1)^{p+q-m-2}\frac{\chi_0}{v^{k_1}}
          |\nabla u||\nabla v|\notag\\
    &\quad\,
      \le \frac{a_0}{4b_0}\int_\Omega (u+1)^{p-2}|\nabla u|^2
           +\widetilde{\ep_{01}}\int_\Omega\frac{|\nabla v|^2}{v^{2k_1+\sigma_1}}
           (u+1)^{p-\eta}
           +c_1\int_\Omega |\nabla v|^2.
\end{align}
%
Similarly, we obtain that for all $\widetilde{\ep_{02}}>0$, 
%
\begin{align}\label{DI1-5}
    &\int_\Omega (u+1)^{p+r-m-2}\frac{\xi_0}{w^{k_2}}
          |\nabla u||\nabla w|\notag\\
    &\quad\,
      \le \frac{a_0}{4c_0}\int_\Omega (u+1)^{p-2}|\nabla u|^2
           +\widetilde{\ep_{02}}\int_\Omega\frac{|\nabla w|^2}{w^{2k_2+\sigma_2}}
           (u+1)^{p-\eta}
           +c_2\int_\Omega |\nabla w|^2
\end{align}
%
holds with $\sigma_2:=\frac{2k_2(2m-2r-\eta+2)}{p-2(m-r+1)}>0$ 
and $c_2=c_2(p, r, m, k_2, \mu_2, \widetilde{\ep_{02}})>0$. 
Hence a combination of \eqref{DI1-1}, \eqref{DI1-4} and \eqref{DI1-5} 
yields that 
%
\begin{align*}
    \frac{d}{dt}\int_\Omega (u+1)^{p-m+1}
    &\le {}-a_0(p-m)(p-m+1)\int_\Omega (u+1)^{p-2}|\nabla u|^2\\
    &\quad\ 
        +\frac{a_0(p-m)(p-m+1)}{4}\int_\Omega (u+1)^{p-2}|\nabla u|^2\notag\\
    &\quad\ 
        +b_0(p-m)(p-m+1)\widetilde{\ep_{01}}
           \int_\Omega\frac{|\nabla v|^2}{v^{2k_1+\sigma_1}}(u+1)^{p-\eta}
           \notag\\
    &\quad\ 
           +b_0(p-m)(p-m+1)c_1\int_\Omega |\nabla v|^2\notag\\
    &\quad\ 
        +\frac{a_0(p-m)(p-m+1)}{4}\int_\Omega (u+1)^{p-2}|\nabla u|^2\notag\\
    &\quad\ 
        +c_0(p-m)(p-m+1)\widetilde{\ep_{02}}
           \int_\Omega\frac{|\nabla w|^2}{w^{2k_2+\sigma_2}}(u+1)^{p-\eta}
           \notag\\
    &\quad\ 
         +c_0(p-m)(p-m+1)c_2\int_\Omega |\nabla w|^2\notag\\
    &
      ={}-\frac{a_0(p-m)(p-m+1)}{2}\int_\Omega (u+1)^{p-2}|\nabla u|^2\notag\\
    &\quad\ 
        +b_0(p-m)(p-m+1)\widetilde{\ep_{01}}
           \int_\Omega\frac{|\nabla v|^2}{v^{2k_1+\sigma_1}}(u+1)^{p-\eta}
           \notag\\
    &\quad\ 
        +c_0(p-m)(p-m+1)\widetilde{\ep_{02}}
           \int_\Omega\frac{|\nabla w|^2}{w^{2k_2+\sigma_2}}(u+1)^{p-\eta}
           \notag\\
    &\quad\ 
           +c_3\int_\Omega |\nabla v|^2+c_4\int_\Omega |\nabla w|^2
\end{align*}
%
with $c_3:=b_0(p-m)(p-m+1)c_1>0$ and $c_4:=c_0(p-m)(p-m+1)c_2>0$. 
Therefore we have
%
\begin{align*}
    &\frac{d}{dt}\int_\Omega (u+1)^{p-m+1}
       +\frac{a_0(p-m)(p-m+1)}{2}\int_\Omega (u+1)^{p-2}|\nabla u|^2\notag\\
    &\quad\, \le
       b_0(p-m)(p-m+1)\widetilde{\ep_{01}}
           \int_\Omega\frac{|\nabla v|^2}{v^{2k_1+\sigma_1}}(u+1)^{p-\eta}
           \notag\\
    &\qquad\,\,
        +c_0(p-m)(p-m+1)\widetilde{\ep_{02}}
           \int_\Omega\frac{|\nabla w|^2}{w^{2k_2+\sigma_2}}(u+1)^{p-\eta}
           \notag\\
    &\qquad\,\,
           +c_3\int_\Omega |\nabla v|^2+c_4\int_\Omega |\nabla w|^2,
\end{align*}
%
which leads to \eqref{DI1} due to  arbitrariness of 
$\widetilde{\ep_{01}}, \widetilde{\ep_{02}}>0$. 
\end{proof}

The second inequality to be shown is given by the following lemma.

%
\begin{lem}\label{difineq2}
Assume that $q<\min\{2,\ m+1\}$, $r<\min\{2,\ m+1\}$, $k_1>1$, $k_2>1$, 
$2(1-k_1)<\sigma_3<0$, $2(1-k_2)<\sigma_4<0$ and
%
\begin{align}\label{condip2}
    &p>\max\Big\{\,
         1,\ m,\ 2(m-q+1),\ 2(m-r+1),\notag\\
    &\hspace{19mm}
         \frac{2[(m-1)(2k_1+\sigma_i-1)+(m-\eta-1)]}{2k_1+\sigma_i-2}\ 
         (i\in\{1,3\}),\notag\\
    &\hspace{19mm} 
         \frac{2[(m-1)(2k_2+\sigma_j-1)+(m-\eta-1)]}{2k_2+\sigma_j-2}\ 
         (j\in\{2,4\})\,
    \Big\}
\end{align}
%
as well as
%
\begin{align}
    \label{condieta2}
    \max\{2(m-1),\ 0\}<\eta<\min\{2(m-q+1),\ 2(m-r+1)\}, 
\end{align}
%
where $\sigma_1, \sigma_2$ in \eqref{condip2} are defined in 
{\rm Lemma \ref{difineq1};}
note that existence of $\eta$ satisfying \eqref{condieta2} 
is guaranteed by $q, r<\min\{2,\ m+1\}$. 
Then there exist constants $\ep_{03}>0$ and $C_k>0$ $(k \in \{3, 4, 5, 6, 7, 8\})$ 
such that
%
\begin{align}\label{DI2}
    &\quad\,\,\,
      \ep_{03}\frac{d}{dt}\int_\Omega \frac{(u+1)^{p-\eta}}
                             {v^{2k_1+\sigma_3-2}w^{2k_2+\sigma_4-2}}
      +\frac{d}{dt}\int_\Omega \frac{(u+1)^{p-\eta}}
                             {v^{2k_1+\sigma_1-2}}
      +\frac{d}{dt}\int_\Omega \frac{(u+1)^{p-\eta}}
                             {w^{2k_2+\sigma_2-2}}
      \notag\\
    &\quad\,
      +C_3\int_\Omega\frac{|\nabla v|^2}{v^{2k_1+\sigma_1}}(u+1)^{p-\eta}
      +C_4\int_\Omega\frac{|\nabla w|^2}{w^{2k_2+\sigma_2}}(u+1)^{p-\eta}
      \notag\\
    &\le 
      \frac{a_0(p-m)(p-m+1)}{4}\int_\Omega (u+1)^{p-2}|\nabla u|^2
      +C_5\int_\Omega |\nabla v|^2+C_6\int_\Omega |\nabla w|^2
      \notag\\
    &\quad\,
      +C_7\int_\Omega (u+1)^p+C_8
\end{align}
%
for all $t \in (0, \tmax)$.
\end{lem}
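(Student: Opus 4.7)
The plan is to derive \eqref{DI2} by differentiating three weighted functionals in time, substituting the three equations of \eqref{P}, integrating by parts, and then combining the resulting identities with the first functional weighted by a small $\ep_{03}>0$ to be fixed at the very end. Writing
\[
F:=\int_\Omega \frac{(u+1)^{p-\eta}}{v^{2k_1+\sigma_3-2}w^{2k_2+\sigma_4-2}},\qquad
G_1:=\int_\Omega \frac{(u+1)^{p-\eta}}{v^{2k_1+\sigma_1-2}},\qquad
G_2:=\int_\Omega \frac{(u+1)^{p-\eta}}{w^{2k_2+\sigma_2-2}},
\]
the target has the shape $\ep_{03}\frac{dF}{dt}+\frac{dG_1}{dt}+\frac{dG_2}{dt}+(\text{good terms})\le (\text{controlled terms})$. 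The assumptions $k_1>1$, $\sigma_1>0$, $2(1-k_1)<\sigma_3<0$ force $2k_1+\sigma_1-2>0$ and $2k_1+\sigma_3-2>0$ (symmetrically for $w$); these positive signs are decisive because they fix the orientation of every dissipation produced by integrating $\Delta v$ and $\Delta w$ by parts.

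For $G_1$, applying the chain rule gives
\[
\tfrac{dG_1}{dt}=(p-\eta)\int_\Omega\tfrac{(u+1)^{p-\eta-1}u_t}{v^{2k_1+\sigma_1-2}}-(2k_1+\sigma_1-2)\int_\Omega\tfrac{(u+1)^{p-\eta}v_t}{v^{2k_1+\sigma_1-1}},
\]
into which the first equation of \eqref{P} and $v_t=d_1\Delta v+\alpha u-\beta v$ are inserted. After integration by parts, the $d_1\Delta v$ piece contributes the crucial dissipation $-c\int\frac{(u+1)^{p-\eta}|\nabla v|^2}{v^{2k_1+\sigma_1}}$, which when moved to the left-hand side becomes $+C_3\int\frac{|\nabla v|^2}{v^{2k_1+\sigma_1}}(u+1)^{p-\eta}$. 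The $u_t$ piece produces chemotactic cross terms with $\nabla u\cdot\nabla v$ and $\nabla u\cdot\nabla w$ (the latter carrying a mixed denominator $v^{2k_1+\sigma_1-2}w^{k_2}$), and integration by parts on $\Delta v$ yields an additional $\nabla u\cdot\nabla v$ term; the $\alpha u,\beta v$ pieces, once $v\ge\mu_1$ from Lemma~\ref{LB} is used, are subsumed into $\int(u+1)^p$ and constants. Symmetric treatment of $G_2$ yields the second good term $+C_4\int\frac{|\nabla w|^2}{w^{2k_2+\sigma_2}}(u+1)^{p-\eta}$.

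The heart of the argument is that $\frac{dF}{dt}$, thanks to the product $vw$ in the denominator, generates two \emph{mixed} dissipations $-c_v\int\frac{(u+1)^{p-\eta}|\nabla v|^2}{v^{2k_1+\sigma_3}w^{2k_2+\sigma_4-2}}$ and $-c_w\int\frac{(u+1)^{p-\eta}|\nabla w|^2}{v^{2k_1+\sigma_3-2}w^{2k_2+\sigma_4}}$, exactly calibrated to swallow the mixed $v$-$w$ cross terms of type $I_2$ arising from $dG_1/dt$ and $dG_2/dt$ after Young splits. For instance, a bad residue $\int\frac{(u+1)^{p-\eta}|\nabla v|^2}{v^{2k_1}w^{2k_2+\sigma_2-2}}$ is rewritten as $v^{|\sigma_3|}w^{\sigma_4-\sigma_2}\cdot\int\frac{(u+1)^{p-\eta}|\nabla v|^2}{v^{2k_1+\sigma_3}w^{2k_2+\sigma_4-2}}$, and the prefactor is bounded by $\mu_1^{-|\sigma_3|}\mu_2^{-(\sigma_2-\sigma_4)}$ via Lemma~\ref{LB}, using $\sigma_3<0$ and $\sigma_4<0<\sigma_2$. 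Each $\nabla u$-involving cross term is split with Lemma~\ref{Young} so that the $\nabla u$ factor produces a fraction of $\int(u+1)^{p-2}|\nabla u|^2$, absorbed by the reserve $\tfrac{a_0(p-m)(p-m+1)}{4}\int(u+1)^{p-2}|\nabla u|^2$ left over from Lemma~\ref{difineq1}, while the $\nabla v$ (or $\nabla w$) factor lands at the exponent $(u+1)^{p-\eta}$ with the intended weights in $v,w$. Condition \eqref{condip2} on $p$ is precisely the algebraic inequality that allows the Young exponents to land at $p-\eta$; any lower-order excess powers of $(u+1)$ are collected into $C_7\int(u+1)^p$, and pure constants into $C_8$.

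The main obstacle is the bookkeeping: the three time derivatives together generate many cross terms, and each must be matched against an available good dissipation via a Young split with compatible exponents. The two-sided sign constraint $2(1-k_i)<\sigma_{i+2}<0$ $(i=1,2)$ for $F$ is the pivotal one: $\sigma_3<0$ is what allows $v^{-2k_1}$ to be upper-bounded by a constant multiple of $v^{-(2k_1+\sigma_3)}$ via $v\ge\mu_1$ (resolving precisely the obstruction flagged after $I_2$ in the introduction), whereas $\sigma_3>2(1-k_1)$ is what keeps the $F$-dissipation of the correct sign, not reversed. Once every cross term has been classified and absorbed, $\ep_{03}$ is taken so small that the small bad terms appearing inside $dF/dt$ itself (handled by additional Young splits) do not disturb the absorption into the $G_1,G_2$ good dissipations, and \eqref{DI2} follows.
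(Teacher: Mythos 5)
Your proposal follows essentially the same route as the paper: differentiate the three weighted functionals (including the new one with the product $vw$ in the denominator), integrate by parts, absorb the cross terms via the weighted Young inequality together with the lower bounds $v\ge\mu_1$, $w\ge\mu_2$ and the exponent conditions \eqref{condip2}--\eqref{condieta2}, and finally fix $\ep_{03}$ and the Young parameters. The only caveat is that $\ep_{03}$ cannot simply be ``taken small'' at the very end: the mixed dissipations scale like $\ep_{03}$ while some of the residues they must absorb (those produced by the single-weight functionals, e.g.\ from $I_7$ and $I_{14}$) carry no factor of $\ep_{03}$, so their Young parameters must be fixed first and $\ep_{03}$ then chosen to balance, exactly as in the paper's choice $\ep_{03}A_{11}=c_3$, $\ep_{03}A_{17}=c_4$.
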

%

\begin{proof}
We first estimate 
$\frac{d}{dt}\int_\Omega \frac{(u+1)^{p-\eta}}{v^{2k_1+\sigma_3-2}w^{2k_2+\sigma_4-2}}$. 
Using the equations in \eqref{P}, 
we have
%
\begin{align}\label{DI2-1}
    &\frac{d}{dt}\int_\Omega 
      \frac{(u+1)^{p-\eta}}
             {v^{2k_1+\sigma_3-2}w^{2k_2+\sigma_4-2}}\notag\\
    &\quad\, 
      =(p-\eta)\int_\Omega 
        \frac{(u+1)^{p-\eta-1}}
                {v^{2k_1+\sigma_3-2}w^{2k_2+\sigma_4-2}}u_t\notag\\
    &\qquad\,\,\, 
      -(2k_1+\sigma_3-2)\int_\Omega 
        \frac{(u+1)^{p-\eta}}{v^{2k_1+\sigma_3-1}w^{2k_2+\sigma_4-2}}v_t
      \notag\\
    &\qquad\,\,\, 
      -(2k_2+\sigma_4-2)\int_\Omega 
        \frac{(u+1)^{p-\eta}}{v^{2k_1+\sigma_3-2}w^{2k_2+\sigma_4-1}}w_t
      \notag\\
    &\quad\, 
      =(p-\eta)\int_\Omega 
        \frac{(u+1)^{p-\eta-1}}
                {v^{2k_1+\sigma_3-2}w^{2k_2+\sigma_4-2}}
        \nabla \cdot [D(u)\nabla u-G(u)\chi(v)\nabla v+H(u)\xi(w)\nabla w]
        \notag\\
    &\qquad\,\,\, 
      -(2k_1+\sigma_3-2)\int_\Omega 
        \frac{(u+1)^{p-\eta}}{v^{2k_1+\sigma_3-1}w^{2k_2+\sigma_4-2}}
        (d_1\Delta v+\alpha u-\beta v)
      \notag\\
    &\qquad\,\,\, 
      -(2k_2+\sigma_4-2)\int_\Omega 
        \frac{(u+1)^{p-\eta}}{v^{2k_1+\sigma_3-2}w^{2k_2+\sigma_4-1}}
        (d_2\Delta w+\gamma u-\delta w)
      \notag\\
    &\quad\,
       =:(p-\eta)J_1+(2k_1+\sigma_3-2)J_2+(2k_2+\sigma_4-2)J_3.
\end{align}
%
As to the first term $J_1$, integrating by parts leads to
%
\begin{align*}
    J_1
    &=-\int_\Omega 
        \nabla\Big(\frac{(u+1)^{p-\eta-1}}
                {v^{2k_1+\sigma_3-2}w^{2k_2+\sigma_4-2}}\Big)
        \cdot [D(u)\nabla u-G(u)\chi(v)\nabla v+H(u)\xi(w)\nabla w]\notag\\[2mm]
    &=-(p-\eta-1)\int_\Omega 
        \frac{(u+1)^{p-\eta-2}\nabla u}
                {v^{2k_1+\sigma_3-2}w^{2k_2+\sigma_4-2}}
        \cdot [D(u)\nabla u-G(u)\chi(v)\nabla v+H(u)\xi(w)\nabla w]\notag\\
    &\quad\,
      +(2k_1+\sigma_3-2)\int_\Omega 
        \frac{(u+1)^{p-\eta-1}\nabla v}
                {v^{2k_1+\sigma_3-1}w^{2k_2+\sigma_4-2}}
        \cdot [D(u)\nabla u-G(u)\chi(v)\nabla v +H(u)\xi(w)\nabla w]\notag\\
    &\quad\,
      +(2k_2+\sigma_4-2)\int_\Omega 
        \frac{(u+1)^{p-\eta-1}\nabla w}
                {v^{2k_1+\sigma_3-2}w^{2k_2+\sigma_4-1}}
        \cdot [D(u)\nabla u-G(u)\chi(v)\nabla v+H(u)\xi(w)\nabla w]\notag\\[2mm]
    &=-(p-\eta-1)\int_\Omega 
        \frac{(u+1)^{p-\eta-2}}
                {v^{2k_1+\sigma_3-2}w^{2k_2+\sigma_4-2}}
        [D(u)|\nabla u|^2-G(u)\chi(v)\nabla u\cdot \nabla v\notag\\
    &\hspace{110mm}
      +H(u)\xi(w)\nabla u \cdot \nabla w]
        \notag\\
    &\quad\,
      +(2k_1+\sigma_3-2)\int_\Omega 
        \frac{(u+1)^{p-\eta-1}}
                {v^{2k_1+\sigma_3-1}w^{2k_2+\sigma_4-2}}
        [D(u)\nabla u \cdot \nabla v-G(u)\chi(v)|\nabla v|^2\notag\\
    &\hspace{110mm}
      +H(u)\xi(w)\nabla v \cdot \nabla w]
        \notag\\
    &\quad\,
      +(2k_2+\sigma_4-2)\int_\Omega 
        \frac{(u+1)^{p-\eta-1}}
                {v^{2k_1+\sigma_3-2}w^{2k_2+\sigma_4-1}}
        [D(u)\nabla u \cdot \nabla w
         -G(u)\chi(v)\nabla v\cdot \nabla w\notag\\
    &\hspace{110mm}
      +H(u)\xi(w)|\nabla w|^2], 
\end{align*}
and then using \eqref{condiD}--\eqref{condiH} yields
\begin{align*}
    J_1
       &\le 
      -a_0(p-\eta-1)\int_\Omega 
        \frac{(u+1)^{p+m-\eta-3}}
                {v^{2k_1+\sigma_3-2}w^{2k_2+\sigma_4-2}}|\nabla u|^2\notag\\
    &\quad\,
      +b_0\chi_0(p-\eta-1)\int_\Omega 
        \frac{(u+1)^{p+q-\eta-3}}
                {v^{3k_1+\sigma_3-2}w^{2k_2+\sigma_4-2}}|\nabla u||\nabla v|
      \notag\\
    &\quad\,
      +c_0\xi_0(p-\eta-1)\int_\Omega 
        \frac{(u+1)^{p+r-\eta-3}}
                {v^{2k_1+\sigma_3-2}w^{3k_2+\sigma_4-2}}|\nabla u||\nabla w|
      \notag\\
    &\quad\,
      +a_0(2k_1+\sigma_3-2)\int_\Omega 
        \frac{(u+1)^{p+m-\eta-2}}
                {v^{2k_1+\sigma_3-1}w^{2k_2+\sigma_4-2}}|\nabla u||\nabla v|
      \notag\\
    &\quad\,
      -(2k_1+\sigma_3-2)\int_\Omega 
        \frac{(u+1)^{p-\eta-1}}
                {v^{2k_1+\sigma_3-1}w^{2k_2+\sigma_4-2}}G(u)\chi(v)|\nabla v|^2
      \notag\\
    &\quad\,
      +c_0\xi_0(2k_1+\sigma_3-2)\int_\Omega 
        \frac{(u+1)^{p+r-\eta-2}}
                {v^{2k_1+\sigma_3-1}w^{3k_2+\sigma_4-2}}|\nabla v||\nabla w|
      \notag\\
    &\quad\,
      +a_0(2k_2+\sigma_4-2)\int_\Omega 
        \frac{(u+1)^{p+m-\eta-2}}
                {v^{2k_1+\sigma_3-2}w^{2k_2+\sigma_4-1}}|\nabla u||\nabla w|
      \notag\\
    &\quad\,
      +b_0\chi_0(2k_2+\sigma_4-2)\int_\Omega 
        \frac{(u+1)^{p+q-\eta-2}}
                {v^{3k_1+\sigma_3-2}w^{2k_2+\sigma_4-1}}|\nabla v||\nabla w|
      \notag\\
    &\quad\,
      +c_0\xi_0(2k_2+\sigma_4-2)\int_\Omega 
        \frac{(u+1)^{p+r-\eta-2}}
                {v^{2k_1+\sigma_3-2}w^{3k_2+\sigma_4-1}}|\nabla w|^2.
\end{align*}
%
As to the second term $J_2$ and third term $J_3$, 
due to integration by parts and straightforward calculations,  
we infer
%
\begin{align*}
    J_2
    &=d_1\int_\Omega \nabla\Big(
        \frac{(u+1)^{p-\eta}}{v^{2k_1+\sigma_3-1}w^{2k_2+\sigma_4-2}}
        \Big)\cdot\nabla v\notag\\
    &\quad\,
      -\alpha\int_\Omega 
        \frac{u(u+1)^{p-\eta}}{v^{2k_1+\sigma_3-1}w^{2k_2+\sigma_4-2}}
      +\beta\int_\Omega
        \frac{(u+1)^{p-\eta}}{v^{2k_1+\sigma_3-2}w^{2k_2+\sigma_4-2}}
      \notag\\
    &=
      d_1(p-\eta)\int_\Omega 
      \frac{(u+1)^{p-\eta-1}}{v^{2k_1+\sigma_3-1}w^{2k_2+\sigma_4-2}}
      \nabla u \cdot \nabla v
      \notag\\
    &\quad\,
      -d_1(2k_1+\sigma_3-1)\int_\Omega 
      \frac{(u+1)^{p-\eta}}{v^{2k_1+\sigma_3}w^{2k_2+\sigma_4-2}}|\nabla v|^2
      \notag\\
    &\quad\,
      -d_1(2k_2+\sigma_4-2)\int_\Omega 
      \frac{(u+1)^{p-\eta}}{v^{2k_1+\sigma_3-1}w^{2k_2+\sigma_4-1}}
      \nabla v\cdot\nabla w
      \notag\\
    &\quad\,
      -\alpha\int_\Omega 
        \frac{u(u+1)^{p-\eta}}{v^{2k_1+\sigma_3-1}w^{2k_2+\sigma_4-2}}
      +\beta\int_\Omega
        \frac{(u+1)^{p-\eta}}{v^{2k_1+\sigma_3-2}w^{2k_2+\sigma_4-2}}
    \notag\\
    &\le
      d_1(p-\eta)\int_\Omega 
      \frac{(u+1)^{p-\eta-1}}{v^{2k_1+\sigma_3-1}w^{2k_2+\sigma_4-2}}
      |\nabla u||\nabla v|
      \notag\\
    &\quad\,
      -d_1(2k_1+\sigma_3-1)\int_\Omega 
      \frac{(u+1)^{p-\eta}}{v^{2k_1+\sigma_3}w^{2k_2+\sigma_4-2}}|\nabla v|^2
      \notag\\
    &\quad\,
      +d_1(2k_2+\sigma_4-2)\int_\Omega 
      \frac{(u+1)^{p-\eta}}{v^{2k_1+\sigma_3-1}w^{2k_2+\sigma_4-1}}
      |\nabla v||\nabla w|
      \notag\\
    &\quad\,
      -\alpha\int_\Omega 
        \frac{u(u+1)^{p-\eta}}{v^{2k_1+\sigma_3-1}w^{2k_2+\sigma_4-2}}
      +\beta\int_\Omega
        \frac{(u+1)^{p-\eta}}{v^{2k_1+\sigma_3-2}w^{2k_2+\sigma_4-2}}
\end{align*}
%
and 
%
\begin{align*}
    J_3
    &=
      d_2(p-\eta)\int_\Omega 
      \frac{(u+1)^{p-\eta-1}}{v^{2k_1+\sigma_3-2}w^{2k_2+\sigma_4-1}}
      \nabla u \cdot \nabla w
      \notag\\
    &\quad\,
      -d_2(2k_1+\sigma_3-2)\int_\Omega 
      \frac{(u+1)^{p-\eta}}{v^{2k_1+\sigma_3-1}w^{2k_2+\sigma_4-1}}
      \nabla v \cdot \nabla w
      \notag\\
    &\quad\,
      -d_2(2k_2+\sigma_4-1)\int_\Omega 
      \frac{(u+1)^{p-\eta}}{v^{2k_1+\sigma_3-2}w^{2k_2+\sigma_4}}
      |\nabla w|^2
      \notag\\
    &\quad\,
      -\gamma\int_\Omega 
        \frac{u(u+1)^{p-\eta}}{v^{2k_1+\sigma_3-2}w^{2k_2+\sigma_4-1}}
      +\delta\int_\Omega
        \frac{(u+1)^{p-\eta}}{v^{2k_1+\sigma_3-2}w^{2k_2+\sigma_4-2}}
      \notag\\
    &\le
      d_2(p-\eta)\int_\Omega 
      \frac{(u+1)^{p-\eta-1}}{v^{2k_1+\sigma_3-2}w^{2k_2+\sigma_4-1}}
      |\nabla u||\nabla w|
      \notag\\
    &\quad\,
      +d_2(2k_1+\sigma_3-2)\int_\Omega 
      \frac{(u+1)^{p-\eta}}{v^{2k_1+\sigma_3-1}w^{2k_2+\sigma_4-1}}
      |\nabla v||\nabla w|
      \notag\\
    &\quad\,
      -d_2(2k_2+\sigma_4-1)\int_\Omega 
      \frac{(u+1)^{p-\eta}}{v^{2k_1+\sigma_3-2}w^{2k_2+\sigma_4}}
      |\nabla w|^2
      \notag\\
    &\quad\,
      -\gamma\int_\Omega 
        \frac{u(u+1)^{p-\eta}}{v^{2k_1+\sigma_3-2}w^{2k_2+\sigma_4-1}}
      +\delta\int_\Omega
        \frac{(u+1)^{p-\eta}}{v^{2k_1+\sigma_3-2}w^{2k_2+\sigma_4-2}}.
\end{align*}
%
Combining \eqref{DI2-1} and the above estimates for $J_1, J_2, J_3$, 
we obtain
%
\begin{align}\label{DI2-5}
    &\frac{d}{dt}\int_\Omega 
      \frac{(u+1)^{p-\eta}}
             {v^{2k_1+\sigma_3-2}w^{2k_2+\sigma_4-2}}\notag\\
    &\quad\, 
      \le
      -A_1\int_\Omega 
        \frac{(u+1)^{p+m-\eta-3}}
                {v^{2k_1+\sigma_3-2}w^{2k_2+\sigma_4-2}}|\nabla u|^2   
      +A_2\int_\Omega 
        \frac{(u+1)^{p+q-\eta-3}}
                {v^{3k_1+\sigma_3-2}w^{2k_2+\sigma_4-2}}|\nabla u||\nabla v|
      \notag\\
    &\quad\,\quad\,
      +A_3\int_\Omega 
        \frac{(u+1)^{p+r-\eta-3}}
                {v^{2k_1+\sigma_3-2}w^{3k_2+\sigma_4-2}}|\nabla u||\nabla w|
      +A_4\int_\Omega 
        \frac{(u+1)^{p+m-\eta-2}}
                {v^{2k_1+\sigma_3-1}w^{2k_2+\sigma_4-2}}|\nabla u||\nabla v|
      \notag\\
    &\quad\,\quad\,
      -A_5\int_\Omega 
        \frac{(u+1)^{p-\eta-1}}
                {v^{2k_1+\sigma_3-1}w^{2k_2+\sigma_4-2}}G(u)\chi(v)|\nabla v|^2
      +A_6\int_\Omega 
        \frac{(u+1)^{p+r-\eta-2}}
                {v^{2k_1+\sigma_3-1}w^{3k_2+\sigma_4-2}}|\nabla v||\nabla w|
      \notag\\
    &\quad\,\quad\,
      +A_7\int_\Omega 
        \frac{(u+1)^{p+m-\eta-2}}
                {v^{2k_1+\sigma_3-2}w^{2k_2+\sigma_4-1}}|\nabla u||\nabla w|
      +A_8\int_\Omega 
        \frac{(u+1)^{p+q-\eta-2}}
                {v^{3k_1+\sigma_3-2}w^{2k_2+\sigma_4-1}}|\nabla v||\nabla w|
      \notag\\
    &\quad\,\quad\,
      +A_9\int_\Omega 
        \frac{(u+1)^{p+r-\eta-2}}
                {v^{2k_1+\sigma_3-2}w^{3k_2+\sigma_4-1}}|\nabla w|^2\notag\\
    &\quad\,\quad\,
      +A_{10}\int_\Omega 
      \frac{(u+1)^{p-\eta-1}}{v^{2k_1+\sigma_3-1}w^{2k_2+\sigma_4-2}}
      |\nabla u||\nabla v|
      -A_{11}\int_\Omega 
      \frac{(u+1)^{p-\eta}}{v^{2k_1+\sigma_3}w^{2k_2+\sigma_4-2}}|\nabla v|^2
      \notag\\
    &\quad\,\quad\,
      +A_{12}\int_\Omega 
      \frac{(u+1)^{p-\eta}}{v^{2k_1+\sigma_3-1}w^{2k_2+\sigma_4-1}}
      |\nabla v||\nabla w|
      -A_{13}\int_\Omega 
        \frac{u(u+1)^{p-\eta}}{v^{2k_1+\sigma_3-1}w^{2k_2+\sigma_4-2}}
      \notag\\
    &\quad\,\quad\,
      +A_{14}\int_\Omega
        \frac{(u+1)^{p-\eta}}{v^{2k_1+\sigma_3-2}w^{2k_2+\sigma_4-2}}
      \notag\\
    &\quad\,\quad\,
      +A_{15}\int_\Omega 
      \frac{(u+1)^{p-\eta-1}}{v^{2k_1+\sigma_3-2}w^{2k_2+\sigma_4-1}}
      |\nabla u||\nabla w|
      +A_{16}\int_\Omega 
      \frac{(u+1)^{p-\eta}}{v^{2k_1+\sigma_3-1}w^{2k_2+\sigma_4-1}}
      |\nabla v||\nabla w|
      \notag\\
    &\quad\,\quad\,
      -A_{17}\int_\Omega 
      \frac{(u+1)^{p-\eta}}{v^{2k_1+\sigma_3-2}w^{2k_2+\sigma_4}}
      |\nabla w|^2
      -A_{18}\int_\Omega 
        \frac{u(u+1)^{p-\eta}}{v^{2k_1+\sigma_3-2}w^{2k_2+\sigma_4-1}}
      \notag\\
    &\quad\,\quad\,
      +A_{19}\int_\Omega
        \frac{(u+1)^{p-\eta}}{v^{2k_1+\sigma_3-2}w^{2k_2+\sigma_4-2}},
\end{align}
%
where
%
\begin{alignat*}{3}
        A_1 &:=a_0(p-\eta-1)(p-\eta), & 
             &\qquad & 
        A_2 &:=b_0\chi_0(p-\eta-1)(p-\eta),
    \\ 
        A_3 &:=c_0\xi_0(p-\eta-1)(p-\eta), &
             &\qquad & 
        A_4 &:=a_0(p-\eta)(2k_1+\sigma_3-2),
    \\
        A_5 &:=(p-\eta)(2k_1+\sigma_3-2), &
             &\qquad &
        A_6 &:=c_0\xi_0(p-\eta)(2k_1+\sigma_3-2),
    \\
        A_7 &:=a_0(p-\eta)(2k_2+\sigma_4-2), & 
             &\qquad & 
        A_8 &:=b_0\chi_0(p-\eta)(2k_2+\sigma_4-2),
    \\ 
        A_9 &:=c_0\xi_0(p-\eta)(2k_2+\sigma_4-2), &
             &\qquad & 
        &
    \\
        A_{10} &:=d_1(p-\eta)(2k_1+\sigma_3-2), & 
             &\qquad & 
        A_{11} &:=d_1(2k_1+\sigma_3-2)(2k_1+\sigma_3-1),
    \\ 
        A_{12} &:=d_1(2k_1+\sigma_3-2)(2k_2+\sigma_4-2), & 
             &\qquad & 
        A_{13} &:=\alpha(2k_1+\sigma_3-2),
    \\ 
        A_{14} &:=\beta(2k_1+\sigma_3-2), &
             &\qquad & 
        &
    \\
        A_{15} &:=d_2(p-\eta)(2k_2+\sigma_4-2), & 
             &\qquad & 
        A_{16} &:=d_2(2k_1+\sigma_3-2)(2k_2+\sigma_4-2),
    \\ 
        A_{17} &:=d_2(2k_2+\sigma_4-2)(2k_2+\sigma_4-1), & 
             &\qquad & 
        A_{18} &:=\gamma(2k_2+\sigma_4-2),
    \\ 
        A_{19} &:=\delta(2k_2+\sigma_4-2). &
             &\qquad & 
        &
\end{alignat*}
%
Similarly, we can derive an estimate for 
$\frac{d}{dt}\int_\Omega \frac{(u+1)^{p-\eta}}{v^{2k_1+\sigma_1-2}}$, that is, 
%
\begin{align}\label{DI2-6}
    &\frac{d}{dt}\int_\Omega 
      \frac{(u+1)^{p-\eta}}{v^{2k_1+\sigma_1-2}}
     \notag\\[1.05mm]
    &\quad\,\le 
      -a_0(p-\eta-1)(p-\eta)\int_\Omega 
        \frac{(u+1)^{p+m-\eta-3}}
                {v^{2k_1+\sigma_1-2}}|\nabla u|^2\notag\\[1.05mm]
    &\quad\,\quad\,
      +b_0\chi_0(p-\eta-1)(p-\eta)\int_\Omega 
        \frac{(u+1)^{p+q-\eta-3}}
                {v^{3k_1+\sigma_1-2}}|\nabla u||\nabla v|
      \notag\\[1.05mm]
    &\quad\,\quad\,
      +c_0\xi_0(p-\eta-1)(p-\eta)\int_\Omega 
        \frac{(u+1)^{p+r-\eta-3}}
                {v^{2k_1+\sigma_1-2}w^{k_2}}|\nabla u||\nabla w|
      \notag\\[1.05mm]
    &\quad\,\quad\,
      +a_0(p-\eta)(2k_1+\sigma_1-2)\int_\Omega 
        \frac{(u+1)^{p+m-\eta-2}}
                {v^{2k_1+\sigma_1-1}}|\nabla u||\nabla v|
      \notag\\[1.05mm]
    &\quad\,\quad\,
      -(p-\eta)(2k_1+\sigma_1-2)\int_\Omega 
        \frac{(u+1)^{p-\eta-1}}
                {v^{2k_1+\sigma_1-1}}G(u)\chi(v)|\nabla v|^2
      \notag\\[1.05mm]
    &\quad\,\quad\,
      +c_0\xi_0(p-\eta)(2k_1+\sigma_1-2)\int_\Omega 
        \frac{(u+1)^{p+r-\eta-2}}
                {v^{2k_1+\sigma_1-1}w^{k_2}}|\nabla v||\nabla w|
      \notag\\[1.05mm]
    &\quad\,\quad\,
      +d_1(p-\eta)(2k_1+\sigma_1-2)\int_\Omega 
      \frac{(u+1)^{p-\eta-1}}{v^{2k_1+\sigma_1-1}}
      |\nabla u||\nabla v|
      \notag\\[1.05mm]
    &\quad\,\quad\,
      -d_1(2k_1+\sigma_1-2)(2k_1+\sigma_1-1)\int_\Omega 
      \frac{(u+1)^{p-\eta}}{v^{2k_1+\sigma_1}}
      |\nabla v|^2
      \notag\\[1.05mm]
    &\quad\,\quad\,
      -\alpha(2k_1+\sigma_1-2)\int_\Omega 
        \frac{u(u+1)^{p-\eta}}{v^{2k_1+\sigma_1-1}}
      \notag\\[1.05mm]
    &\quad\,\quad\,
      +\beta(2k_1+\sigma_1-2)\int_\Omega
        \frac{(u+1)^{p-\eta}}{v^{2k_1+\sigma_1-2}}.
\end{align}
%
We next estimate
$\frac{d}{dt}\int_\Omega \frac{(u+1)^{p-\eta}}{w^{2k_2+\sigma_2-2}}$. 
Using the first and third equations 
in \eqref{P}, we see that
%
\begin{align}\label{DI2-7}
    &\frac{d}{dt}\int_\Omega 
      \frac{(u+1)^{p-\eta}}{w^{2k_2+\sigma_2-2}}\notag\\[1.05mm]
    &\quad\, 
      =(p-\eta)\int_\Omega 
        \frac{(u+1)^{p-\eta-1}}{w^{2k_2+\sigma_2-2}}u_t
        -(2k_2+\sigma_2-2)\int_\Omega 
        \frac{(u+1)^{p-\eta}}{w^{2k_2+\sigma_2-1}}w_t
      \notag\\[1.05mm]
    &\quad\, 
      =(p-\eta)\int_\Omega 
        \frac{(u+1)^{p-\eta-1}}{w^{2k_2+\sigma_2-2}}
        \nabla \cdot [D(u)\nabla u-G(u)\chi(v)\nabla v+H(u)\xi(w)\nabla w]
        \notag\\[1.05mm]
    &\qquad\,\,\, 
        -(2k_2+\sigma_2-2)\int_\Omega 
        \frac{(u+1)^{p-\eta}}{w^{2k_2+\sigma_2-1}}
        (d_2\Delta w+\gamma u-\delta w)
      \notag\\[1.05mm]
    &\quad\,
       =:(p-\eta)J_4+(2k_2+\sigma_2-2)J_5.
\end{align}
%
Estimating $J_4, J_5$ in the same way as $J_1, J_2$, respectively, 
we obtain
%
\begin{align*}
    J_4
    &\le 
      -a_0(p-\eta-1)\int_\Omega 
        \frac{(u+1)^{p+m-\eta-3}}
                {w^{2k_2+\sigma_2-2}}|\nabla u|^2\notag\\
    &\quad\,
      +b_0\chi_0(p-\eta-1)\int_\Omega 
        \frac{(u+1)^{p+q-\eta-3}}
                {v^{k_1}w^{2k_2+\sigma_2-2}}|\nabla u||\nabla v|
      \notag\\
    &\quad\,
      +c_0\xi_0(p-\eta-1)\int_\Omega 
        \frac{(u+1)^{p+r-\eta-3}}
                {w^{3k_2+\sigma_2-2}}|\nabla u||\nabla w|
      \notag\\
    &\quad\,
      +a_0(2k_2+\sigma_2-2)\int_\Omega 
        \frac{(u+1)^{p+m-\eta-2}}
                {w^{2k_2+\sigma_2-1}}|\nabla u||\nabla w|
      \notag\\
    &\quad\,
      +b_0\chi_0(2k_2+\sigma_2-2)\int_\Omega 
        \frac{(u+1)^{p+q-\eta-2}}
                {v^{k_1}w^{2k_2+\sigma_2-1}}|\nabla v||\nabla w|
      \notag\\
    &\quad\,
      +c_0\xi_0(2k_2+\sigma_2-2)\int_\Omega 
        \frac{(u+1)^{p+r-\eta-2}}
                {w^{3k_2+\sigma_2-1}}|\nabla w|^2
\end{align*}
%
and 
%
\begin{align*}
    J_5
    &\le 
      d_2(p-\eta)\int_\Omega 
      \frac{(u+1)^{p-\eta-1}}{w^{2k_2+\sigma_2-1}}
      |\nabla u||\nabla w|
      -d_2(2k_2+\sigma_2-1)\int_\Omega 
      \frac{(u+1)^{p-\eta}}{w^{2k_2+\sigma_2}}
      |\nabla w|^2
      \notag\\
    &\quad\,
      -\gamma\int_\Omega 
        \frac{u(u+1)^{p-\eta}}{w^{2k_2+\sigma_2-1}}
      +\delta\int_\Omega
        \frac{(u+1)^{p-\eta}}{w^{2k_2+\sigma_2-2}}.
\end{align*}
%
Thus a combination of \eqref{DI2-7} and the above estimates for $J_4, J_5$ 
yields that
%
\begin{align}\label{DI2-10}
    \frac{d}{dt}\int_\Omega 
      \frac{(u+1)^{p-\eta}}{w^{2k_2+\sigma_2-2}}
    &\le 
      -a_0(p-\eta-1)(p-\eta)\int_\Omega 
        \frac{(u+1)^{p+m-\eta-3}}
                {w^{2k_2+\sigma_2-2}}|\nabla u|^2\notag\\
    &\quad\,
      +b_0\chi_0(p-\eta-1)(p-\eta)\int_\Omega 
        \frac{(u+1)^{p+q-\eta-3}}
                {v^{k_1}w^{2k_2+\sigma_2-2}}|\nabla u||\nabla v|
      \notag\\
    &\quad\,
      +c_0\xi_0(p-\eta-1)(p-\eta)\int_\Omega 
        \frac{(u+1)^{p+r-\eta-3}}
                {w^{3k_2+\sigma_2-2}}|\nabla u||\nabla w|
      \notag\\
    &\quad\,
      +a_0(p-\eta)(2k_2+\sigma_2-2)\int_\Omega 
        \frac{(u+1)^{p+m-\eta-2}}
                {w^{2k_2+\sigma_2-1}}|\nabla u||\nabla w|
      \notag\\
    &\quad\,
      +b_0\chi_0(p-\eta)(2k_2+\sigma_2-2)\int_\Omega 
        \frac{(u+1)^{p+q-\eta-2}}
                {v^{k_1}w^{2k_2+\sigma_2-1}}|\nabla v||\nabla w|
      \notag\\
    &\quad\,
      +c_0\xi_0(p-\eta)(2k_2+\sigma_2-2)\int_\Omega 
        \frac{(u+1)^{p+r-\eta-2}}
                {w^{3k_2+\sigma_2-1}}|\nabla w|^2
      \notag\\
    &\quad\,
      +d_2(p-\eta)(2k_2+\sigma_2-2)\int_\Omega 
      \frac{(u+1)^{p-\eta-1}}{w^{2k_2+\sigma_2-1}}
      |\nabla u||\nabla w|
      \notag\\
    &\quad\,
      -d_2(2k_2+\sigma_2-2)(2k_2+\sigma_2-1)\int_\Omega 
      \frac{(u+1)^{p-\eta}}{w^{2k_2+\sigma_2}}
      |\nabla w|^2
      \notag\\
    &\quad\,
      -\gamma(2k_2+\sigma_2-2)\int_\Omega 
        \frac{u(u+1)^{p-\eta}}{w^{2k_2+\sigma_2-1}}
      \notag\\
    &\quad\,
      +\delta(2k_2+\sigma_2-2)\int_\Omega
        \frac{(u+1)^{p-\eta}}{w^{2k_2+\sigma_2-2}}.
\end{align}
%
Adding estimates \eqref{DI2-6} and \eqref{DI2-10}, 
and moving 
$\int_\Omega \frac{(u+1)^{p-\eta}}{v^{2k_1+\sigma_1}}
      |\nabla v|^2$ and 
$\int_\Omega 
      \frac{(u+1)^{p-\eta}}{w^{2k_2+\sigma_2}}
      |\nabla w|^2$
to the left-hand side, we have
%
\begin{align}\label{DI2-11}
    &\frac{d}{dt}\int_\Omega 
      \frac{(u+1)^{p-\eta}}{v^{2k_1+\sigma_1-2}}
    +\frac{d}{dt}\int_\Omega 
      \frac{(u+1)^{p-\eta}}{w^{2k_2+\sigma_2-2}}
    +B_{19}\int_\Omega 
      \frac{(u+1)^{p-\eta}}{v^{2k_1+\sigma_1}}
      |\nabla v|^2
    +B_{20}\int_\Omega 
      \frac{(u+1)^{p-\eta}}{w^{2k_2+\sigma_2}}
      |\nabla w|^2
    \notag\\
    &\quad\,\le 
      -B_1\int_\Omega 
        \frac{(u+1)^{p+m-\eta-3}}
                {v^{2k_1+\sigma_1-2}}|\nabla u|^2
      -B_2\int_\Omega 
        \frac{(u+1)^{p+m-\eta-3}}
                {w^{2k_2+\sigma_2-2}}|\nabla u|^2\notag\\
    &\qquad\,\,
      +B_3\int_\Omega 
        \frac{(u+1)^{p+m-\eta-2}}
                {v^{2k_1+\sigma_1-1}}|\nabla u||\nabla v|
      +B_4\int_\Omega 
        \frac{(u+1)^{p+m-\eta-2}}
                {w^{2k_2+\sigma_2-1}}|\nabla u||\nabla w|
      \notag\\
    &\qquad\,\,
      +B_5\int_\Omega 
        \frac{(u+1)^{p+q-\eta-3}}
                {v^{3k_1+\sigma_1-2}}|\nabla u||\nabla v|
      -B_6\int_\Omega 
        \frac{(u+1)^{p-\eta-1}}
                {v^{2k_1+\sigma_1-1}}G(u)\chi(v)|\nabla v|^2
      \notag\\
    &\qquad\,\,
      +B_7\int_\Omega 
        \frac{(u+1)^{p+q-\eta-3}}
                {v^{k_1}w^{2k_2+\sigma_2-2}}|\nabla u||\nabla v|
      +B_8\int_\Omega 
        \frac{(u+1)^{p+r-\eta-3}}
                {w^{3k_2+\sigma_2-2}}|\nabla u||\nabla w|
      \notag\\
    &\qquad\,\,
      +B_9\int_\Omega 
        \frac{(u+1)^{p+r-\eta-3}}
                {v^{2k_1+\sigma_1-2}w^{k_2}}|\nabla u||\nabla w|
      +B_{10}\int_\Omega 
        \frac{(u+1)^{p+r-\eta-2}}
                {v^{2k_1+\sigma_1-1}w^{k_2}}|\nabla v||\nabla w|
      \notag\\
    &\qquad\,\,
      +B_{11}\int_\Omega 
        \frac{(u+1)^{p+q-\eta-2}}
                {v^{k_1}w^{2k_2+\sigma_2-1}}|\nabla v||\nabla w|
      +B_{12}\int_\Omega 
        \frac{(u+1)^{p+r-\eta-2}}
                {w^{3k_2+\sigma_2-1}}|\nabla w|^2
      \notag\\
    &\qquad\,\,
      +B_{13}\int_\Omega 
      \frac{(u+1)^{p-\eta-1}}{v^{2k_1+\sigma_1-1}}
      |\nabla u||\nabla v|
      +B_{14}\int_\Omega 
      \frac{(u+1)^{p-\eta-1}}{w^{2k_2+\sigma_2-1}}
      |\nabla u||\nabla w|
      \notag\\
    &\qquad\,\,
      -B_{15}\int_\Omega 
        \frac{u(u+1)^{p-\eta}}{v^{2k_1+\sigma_1-1}}
      +B_{16}\int_\Omega
        \frac{(u+1)^{p-\eta}}{v^{2k_1+\sigma_1-2}}
      \notag\\
    &\qquad\,\,
      -B_{17}\int_\Omega 
        \frac{u(u+1)^{p-\eta}}{w^{2k_2+\sigma_2-1}}
      +B_{18}\int_\Omega
        \frac{(u+1)^{p-\eta}}{w^{2k_2+\sigma_2-2}},
\end{align}
%
where
%
\begin{alignat*}{3}
        B_1 &:=a_0(p-\eta-1)(p-\eta), & 
             &\qquad & 
        B_2 &:=a_0(p-\eta-1)(p-\eta),
    \\ 
        B_3 &:=a_0(p-\eta)(2k_1+\sigma_1-2), &
             &\qquad & 
        B_4 &:=a_0(p-\eta)(2k_2+\sigma_2-2),
    \\
        B_5 &:=b_0\chi_0(p-\eta-1)(p-\eta), &
             &\qquad &
        B_6 &:=(p-\eta)(2k_1+\sigma_1-2),
    \\
        B_7 &:=b_0\chi_0(p-\eta-1)(p-\eta),& 
             &\qquad & 
        B_8 &:=c_0\xi_0(p-\eta-1)(p-\eta),
    \\ 
        B_9 &:=c_0\xi_0(p-\eta-1)(p-\eta), &
             &\qquad & 
        B_{10} &:=c_0\xi_0(p-\eta)(2k_1+\sigma_1-2),
    \\
        B_{11} &:=b_0\chi_0(p-\eta)(2k_2+\sigma_2-2),& 
             &\qquad & 
        B_{12} &:=c_0\xi_0(p-\eta)(2k_2+\sigma_2-2),
    \\ 
        B_{13} &:=d_1(p-\eta)(2k_1+\sigma_1-2), & 
             &\qquad & 
        B_{14} &:=d_2(p-\eta)(2k_2+\sigma_2-2),
    \\ 
        B_{15} &:=\alpha(2k_1+\sigma_1-2), &
             &\qquad & 
        B_{16} &:=\beta(2k_1+\sigma_1-2),
    \\ 
        B_{17} &:=\gamma(2k_2+\sigma_2-2), &
             &\qquad & 
        B_{18} &:=\delta(2k_2+\sigma_2-2),
    \\ 
        B_{19} &:=d_1(2k_1+\sigma_1-2)(2k_1+\sigma_1-1), & 
             &\qquad & 
        B_{20} &:=d_2(2k_2+\sigma_2-2)(2k_2+\sigma_2-1).
\end{alignat*}
%
Adding \eqref{DI2-11} and \eqref{DI2-5} multiplied by $\ep_{03}>0$ 
and dropping the nine terms containing 
$A_i, B_j$ ($i \in \{1, 5, 13, 18\}$, $j \in \{1, 2, 6, 15, 17\}$), 
we can see that the following inequality holds:
%
\begin{align}\label{DI2-12}
    &\quad\,\ep_{03}\frac{d}{dt}\int_\Omega 
      \frac{(u+1)^{p-\eta}}
             {v^{2k_1+\sigma_3-2}w^{2k_2+\sigma_4-2}}
    +\frac{d}{dt}\int_\Omega 
      \frac{(u+1)^{p-\eta}}{v^{2k_1+\sigma_1-2}}
    +\frac{d}{dt}\int_\Omega 
      \frac{(u+1)^{p-\eta}}{w^{2k_2+\sigma_2-2}}\notag\\[0.5mm]
    &\quad
    +B_{19}\int_\Omega 
      \frac{(u+1)^{p-\eta}}{v^{2k_1+\sigma_1}}
      |\nabla v|^2
    +B_{20}\int_\Omega 
      \frac{(u+1)^{p-\eta}}{w^{2k_2+\sigma_2}}
      |\nabla w|^2
    \notag\\[0.5mm]
    &\quad
    +\ep_{03}A_{11}\int_\Omega 
      \frac{(u+1)^{p-\eta}}{v^{2k_1+\sigma_3}w^{2k_2+\sigma_4-2}}|\nabla v|^2
    +\ep_{03}A_{17}\int_\Omega 
      \frac{(u+1)^{p-\eta}}{v^{2k_1+\sigma_3-2}w^{2k_2+\sigma_4}}
      |\nabla w|^2
    \notag\\[0.5mm]
    &\le 
      \underset{\hfill\bm{=:\,I_1}}{\underline{\ep_{03}A_4\int_\Omega 
        \frac{(u+1)^{p+m-\eta-2}}
                {v^{2k_1+\sigma_3-1}w^{2k_2+\sigma_4-2}}|\nabla u||\nabla v|}}
      +\underset{\hfill\bm{=:\,I_2}}{\underline{B_3\int_\Omega 
        \frac{(u+1)^{p+m-\eta-2}}
                {v^{2k_1+\sigma_1-1}}|\nabla u||\nabla v|}}
      \notag\\[-1.1mm]
    &\quad
      +\underset{\hfill\bm{=:\,I_3}}{\underline{\ep_{03}A_7\int_\Omega 
        \frac{(u+1)^{p+m-\eta-2}}
                {v^{2k_1+\sigma_3-2}w^{2k_2+\sigma_4-1}}|\nabla u||\nabla w|}}
      +\underset{\hfill\bm{=:\,I_4}}{\underline{B_4\int_\Omega 
        \frac{(u+1)^{p+m-\eta-2}}
                {w^{2k_2+\sigma_2-1}}|\nabla u||\nabla w|}}
      \notag\\[-1.1mm]
    &\quad
      +\underset{\hfill\bm{=:\,I_5}}{\underline{\ep_{03}A_2\int_\Omega 
        \frac{(u+1)^{p+q-\eta-3}}
                {v^{3k_1+\sigma_3-2}w^{2k_2+\sigma_4-2}}|\nabla u||\nabla v|}}
      +\underset{\hfill\bm{=:\,I_6}}{\underline{B_5\int_\Omega 
        \frac{(u+1)^{p+q-\eta-3}}
                {v^{3k_1+\sigma_1-2}}|\nabla u||\nabla v|}}
      \notag\\[-1.1mm]
    &\quad
      +\underset{\hfill\bm{=:\,I_7}}{\underline{B_7\int_\Omega 
        \frac{(u+1)^{p+q-\eta-3}}
                {v^{k_1}w^{2k_2+\sigma_2-2}}|\nabla u||\nabla v|}}
      +\underset{\hfill\bm{=:\,I_8}}{\underline{B_8\int_\Omega 
        \frac{(u+1)^{p+r-\eta-3}}
                {w^{3k_2+\sigma_2-2}}|\nabla u||\nabla w|}}
      \notag\\[-1.1mm]
    &\quad
      +\underset{\hfill\bm{=:\,I_9}}{\underline{\ep_{03}A_3\int_\Omega 
        \frac{(u+1)^{p+r-\eta-3}}
                {v^{2k_1+\sigma_3-2}w^{3k_2+\sigma_4-2}}|\nabla u||\nabla w|}}
      +\underset{\hfill\bm{=:\,I_{10}}}{\underline{B_9\int_\Omega 
        \frac{(u+1)^{p+r-\eta-3}}
                {v^{2k_1+\sigma_1-2}w^{k_2}}|\nabla u||\nabla w|}}
      \notag\\[-1.1mm]
    &\quad
      +\underset{\hfill\bm{=:\,I_{11}}}{\underline{\ep_{03}A_6\int_\Omega 
        \frac{(u+1)^{p+r-\eta-2}}
                {v^{2k_1+\sigma_3-1}w^{3k_2+\sigma_4-2}}|\nabla v||\nabla w|}}
      +\underset{\hfill\bm{=:\,I_{12}}}{\underline{\ep_{03}A_8\int_\Omega 
        \frac{(u+1)^{p+q-\eta-2}}
                {v^{3k_1+\sigma_3-2}w^{2k_2+\sigma_4-1}}|\nabla v||\nabla w|}}
    \notag\\[-1.1mm]
    &\quad
      +\underset{\hfill\bm{=:\,I_{13}}}{\underline{B_{10}\int_\Omega 
        \frac{(u+1)^{p+r-\eta-2}}
                {v^{2k_1+\sigma_1-1}w^{k_2}}|\nabla v||\nabla w|}}
      +\underset{\hfill\bm{=:\,I_{14}}}{\underline{B_{11}\int_\Omega 
        \frac{(u+1)^{p+q-\eta-2}}
                {v^{k_1}w^{2k_2+\sigma_2-1}}|\nabla v||\nabla w|}}
      \notag\\[-1.1mm]
    &\quad  
    +\underset{\hfill\bm{=:\,I_{15}}}{\underline{\ep_{03}A_9\int_\Omega 
        \frac{(u+1)^{p+r-\eta-2}}
                {v^{2k_1+\sigma_3-2}w^{3k_2+\sigma_4-1}}|\nabla w|^2}}
    +\underset{\hfill\bm{=:\,I_{16}}}{\underline{B_{12}\int_\Omega 
        \frac{(u+1)^{p+r-\eta-2}}
                {w^{3k_2+\sigma_2-1}}|\nabla w|^2}}
      \notag\\[-1.1mm]
    &\quad
      +\underset{\hfill\bm{=:\,I_{17}}}{\underline{\ep_{03}A_{10}\int_\Omega 
      \frac{(u+1)^{p-\eta-1}}{v^{2k_1+\sigma_3-1}w^{2k_2+\sigma_4-2}}
      |\nabla u||\nabla v|}}
      +\underset{\hfill\bm{=:\,I_{18}}}{\underline{B_{13}\int_\Omega 
      \frac{(u+1)^{p-\eta-1}}{v^{2k_1+\sigma_1-1}}
      |\nabla u||\nabla v|}}
      \notag\\[-1.1mm]
    &\quad
      +\underset{\hfill\bm{=:\,I_{19}}}{\underline{\ep_{03}A_{15}\int_\Omega 
      \frac{(u+1)^{p-\eta-1}}{v^{2k_1+\sigma_3-2}w^{2k_2+\sigma_4-1}}
      |\nabla u||\nabla w|}}
      +\underset{\hfill\bm{=:\,I_{20}}}{\underline{B_{14}\int_\Omega 
      \frac{(u+1)^{p-\eta-1}}{w^{2k_2+\sigma_2-1}}
      |\nabla u||\nabla w|}}
      \notag\\[-1.1mm]
    &\quad
      +\underset{\hfill\bm{=:\,I_{21}}}{\underline{\ep_{03}(A_{12}+A_{16})\int_\Omega 
      \frac{(u+1)^{p-\eta}}{v^{2k_1+\sigma_3-1}w^{2k_2+\sigma_4-1}}
      |\nabla v||\nabla w|}}
    \notag\\[-1.1mm]
    &\quad
      +\underset{\hfill\bm{=:\,I_{22}}}{\underline{\ep_{03}A_{14}\int_\Omega
        \frac{(u+1)^{p-\eta}}{v^{2k_1+\sigma_3-2}w^{2k_2+\sigma_4-2}}}}
      +\underset{\hfill\bm{=:\,I_{23}}}{\underline{B_{16}\int_\Omega
        \frac{(u+1)^{p-\eta}}{v^{2k_1+\sigma_1-2}}}}
      \notag\\[-0.5mm]
    &\quad
      +\underset{\hfill\bm{=:\,I_{24}}}{\underline{\ep_{03}A_{19}\int_\Omega
        \frac{(u+1)^{p-\eta}}{v^{2k_1+\sigma_3-2}w^{2k_2+\sigma_4-2}}}}
      +\underset{\hfill\bm{=:\,I_{25}}}{\underline{B_{18}\int_\Omega
        \frac{(u+1)^{p-\eta}}{w^{2k_2+\sigma_2-2}}}}.
\end{align}
%
We now estimate the twenty-five terms 
$I_1$--$I_{25}$ by dividing it into the four steps. 
Here we note from \eqref{infv}, \eqref{infw} and the condition 
$q, r<2$ that $I_5, I_6, I_8, I_9$ can be estimated by 
$I_{17}, I_{18}, I_{20}, I_{19}$, 
and that $I_{11}, I_{12}$ can be estimated by $I_{21}$, respectively. 

{\bf Step 1.} We estimate the fourteen terms containing $|\nabla u|$ 
(i.e., $I_1$--$I_{10}$, $I_{17}$--$I_{20}$) so that the integral 
$\int_\Omega (u+1)^{p-2}|\nabla u|^2$ appears. 
For instance, as to an estimate for $I_{1}$, 
for all $\ep_1>0$, 
it can be obtained upon Lemma~\ref{Young} that
%
\begin{align}\label{preest1}
      I_1
      &=\ep_{03}A_4\int_\Omega (u+1)^{\frac{p-2}{2}}|\nabla u| \cdot
          \frac{(u+1)^{\frac{p+2m-2\eta-2}{2}}}{v^{2k_1+\sigma_3-1}w^{2k_2+\sigma_4-2}}
          |\nabla v|
      \notag\\
      &\le 
        \ep_{03}A_4\cdot\frac{\ep_1}{2}\int_\Omega (u+1)^{p-2}|\nabla u|^2
        +\ep_{03}A_4\cdot\frac{1}{2\ep_1}
          \int_\Omega\frac{(u+1)^{p+2m-2\eta-2}}
                                   {v^{4k_1+2\sigma_3-2}w^{4k_2+2\sigma_4-4}}
                            |\nabla v|^2
        \notag\\
      &\le\frac{\ep_{03}A_4}{2}\cdot\ep_1\int_\Omega (u+1)^{p-2}|\nabla u|^2
        +\frac{\ep_{03}A_4'}{2}\frac{1}{\ep_1}\cdot\ep_1^{\theta_1}
          \int_\Omega \frac{(u+1)^{p-\eta}}
                             {v^{(4k_1+2\sigma_3-2)\theta_1}w^{(4k_2+2\sigma_4-4)\theta_1}}
                            |\nabla v|^2
        \notag\\
      &\quad\ \,
        +\frac{\ep_{03}A_4''}{2}\frac{1}{\ep_1}\cdot\ep_1^{-\frac{\theta_1}{\theta_1-1}}
          \int_\Omega |\nabla v|^2
\end{align}
%
with $\theta_1:=\frac{p-\eta}{p+2m-2\eta-2}>1$ by the condition 
$\eta>2(m-1)$ (see \eqref{condieta2}). 
Here we can check that 
$(4k_1+2\sigma_3-2)\theta_1 \ge 2k_1+\sigma_3$ and 
$(4k_2+2\sigma_4-4)\theta_1 \ge 2k_2+\sigma_4-2$. 
Indeed, as to the former, 
a simple calculation and the assumption $\sigma_3>2(1-k_1)$ 
(see Lemma \ref{difineq2}) as well as \eqref{condip2} yield that
%
\begin{align*}
    &(4k_1+2\sigma_3-2)\theta_1-(2k_1+\sigma_3)
    \notag\\
    &\quad\,=
      (2k_1+\sigma_3-1)\Big(2\theta_1-1-\frac{1}{2k_1+\sigma_3-1}\Big)
    \notag\\
    &\quad\,=
      \frac{2k_1+\sigma_3-1}{p+2m-2\eta-2}
      \Big(2p-2\eta-(p+2m-2\eta-2)-\frac{p+2m-2\eta-2}{2k_1+\sigma_3-1}\Big)
    \notag\\
    &\quad\,=
      \frac{2k_1+\sigma_3-1}{p+2m-2\eta-2}
      \Big(\frac{2k_1+\sigma_3-2}{2k_1+\sigma_3-1}p-2(m-1)-\frac{2(m-\eta-1)}{2k_1+\sigma_3-1}\Big)
    \notag\\
    &\quad\,=
      \frac{2k_1+\sigma_3-2}{p+2m-2\eta-2}
      \Big(p-\frac{2[(m-1)(2k_1+\sigma_3-1)+(m-\eta-1)]}{2k_1+\sigma_3-2}\Big)
    \notag\\
    &\quad\,>0.
\end{align*}
%
As to the later, the fact $\theta_1>1$ and the assumption 
$\sigma_4>2(1-k_2)$ derive that
%
\begin{align*}
    (4k_2+2\sigma_4-4)\theta_1-(2k_2+\sigma_4-2)
    =(2k_2+\sigma_4-2)(2\theta_1-1)>0.
\end{align*}
%
Thus we see from \eqref{infv}, \eqref{infw} that
%
\begin{align*}
    &v^{(4k_1+2\sigma_3-2)\theta_1}
    \ge \mu_1^{(4k_1+2\sigma_3-2)\theta_1-(2k_1+\sigma_3)}
          v^{2k_1+\sigma_3},\notag\\
    &w^{(4k_2+2\sigma_4-4)\theta_1}
    \ge \mu_2^{(4k_2+2\sigma_4-4)\theta_1-(2k_2+\sigma_4-2)}
         w^{2k_2+\sigma_4-2},
\end{align*}
%
which together with \eqref{preest1} imply that
%
\begin{align}\label{estI1}
      I_1
      &\le\frac{\ep_{03}A_4}{2}\cdot\ep_1\int_\Omega (u+1)^{p-2}|\nabla u|^2
        +\frac{\ep_{03}A_4'}{2}\cdot\ep_1^{\theta_1-1}
          \int_\Omega \frac{(u+1)^{p-\eta}}
                             {v^{2k_1+\sigma_3}w^{2k_2+\sigma_4-2}}
                            |\nabla v|^2
        \notag\\
      &\quad\ \,
        +\frac{\ep_{03}A_4''}{2}\cdot\ep_1^{-\frac{\theta_1}{\theta_1-1}-1}
          \int_\Omega |\nabla v|^2.
\end{align}
%
Proceeding similarly as the above estimate, we also obtain that 
for all $\ep_i>0$ there exist constants $\theta_i>1$ 
($i \in \{2, 3, 4, 7, 10, 17, 18, 19, 20\}$) such that
%
\begin{align}
      I_2
      &\le\frac{B_3}{2}\cdot\ep_2\int_\Omega (u+1)^{p-2}|\nabla u|^2
        +\frac{B_3'}{2}\cdot\ep_2^{\theta_2-1}
          \int_\Omega \frac{(u+1)^{p-\eta}}
                             {v^{2k_1+\sigma_1}}
                            |\nabla v|^2
        \notag\\
      &\quad\ \,
        +\frac{B_3''}{2}\ep_2^{-\frac{\theta_2}{\theta_2-1}-1}
          \int_\Omega |\nabla v|^2,
      \\
      I_3
      &\le\frac{\ep_{03}A_7}{2}\cdot\ep_3\int_\Omega (u+1)^{p-2}|\nabla u|^2
        +\frac{\ep_{03}A_7'}{2}\cdot\ep_3^{\theta_3-1}
          \int_\Omega \frac{(u+1)^{p-\eta}}
                             {v^{2k_1+\sigma_3-2}w^{2k_2+\sigma_4}}
                            |\nabla w|^2
        \notag\\
      &\quad\ \,
        +\frac{\ep_{03}A_7''}{2}\cdot\ep_3^{-\frac{\theta_3}{\theta_3-1}-1}
          \int_\Omega |\nabla w|^2,
      \\
      I_4
      &\le\frac{B_4}{2}\cdot\ep_4\int_\Omega (u+1)^{p-2}|\nabla u|^2
        +\frac{B_4'}{2}\cdot\ep_4^{\theta_4-1}
          \int_\Omega \frac{(u+1)^{p-\eta}}
                             {w^{2k_2+\sigma_2}}
                            |\nabla w|^2
        \notag\\
      &\quad\ \,
        +\frac{B_4''}{2}\ep_4^{-\frac{\theta_4}{\theta_4-1}-1}
          \int_\Omega |\nabla w|^2,
      \\
      I_7
      &\le\frac{B_7}{2}\cdot\ep_7\int_\Omega (u+1)^{p-2}|\nabla u|^2
        +\frac{B_7'}{2}\cdot\ep_7^{\theta_7-1}
          \int_\Omega \frac{(u+1)^{p-\eta}}
                             {v^{2k_1+\sigma_1}w^{2k_2+\sigma_2-2}}
                             |\nabla v|^2
        \notag\\
      &\quad\ \,
        +\frac{B_7''}{2}\ep_7^{-\frac{\theta_7}{\theta_7-1}-1}
          \int_\Omega |\nabla v|^2,
      \\
      I_{10}
      &\le\frac{B_9}{2}\cdot\ep_{10}\int_\Omega (u+1)^{p-2}|\nabla u|^2
        +\frac{B_9'}{2}\cdot\ep_{10}^{\theta_{10}-1}
          \int_\Omega \frac{(u+1)^{p-\eta}}
                             {v^{2k_1+\sigma_1-2}w^{2k_2+\sigma_2}}
                             |\nabla w|^2
        \notag\\
      &\quad\ \,
        +\frac{B_9''}{2}\ep_{10}^{-\frac{\theta_{10}}{\theta_{10}-1}-1}
          \int_\Omega |\nabla w|^2,
      \\
      I_{17}
      &\le\frac{\ep_{03}A_{10}}{2}\cdot\ep_{17}\int_\Omega (u+1)^{p-2}|\nabla u|^2
        +\frac{\ep_{03}A_{10}'}{2}\cdot\ep_{17}^{\theta_{17}-1}
          \int_\Omega \frac{(u+1)^{p-\eta}}
                             {v^{2k_1+\sigma_3}w^{2k_2+\sigma_4-2}}
                            |\nabla v|^2
        \notag\\
      &\quad\ \,
        +\frac{\ep_{03}A_{10}''}{2}\cdot\ep_{17}^{-\frac{\theta_{17}}{\theta_{17}-1}-1}
          \int_\Omega |\nabla v|^2,
      \\
      I_{18}
      &\le\frac{B_{13}}{2}\cdot\ep_{18}\int_\Omega (u+1)^{p-2}|\nabla u|^2
        +\frac{B_{13}'}{2}\cdot\ep_{18}^{\theta_{18}-1}
          \int_\Omega \frac{(u+1)^{p-\eta}}
                             {v^{2k_1+\sigma_1}}
                            |\nabla v|^2
        \notag\\
      &\quad\ \,
        +\frac{B_{13}''}{2}\cdot\ep_{18}^{-\frac{\theta_{18}}{\theta_{18}-1}-1}
          \int_\Omega |\nabla v|^2,
      \\
      I_{19}
      &\le\frac{\ep_{03}A_{15}}{2}\cdot\ep_{19}\int_\Omega (u+1)^{p-2}|\nabla u|^2
        +\frac{\ep_{03}A_{15}'}{2}\cdot\ep_{19}^{\theta_{19}-1}
          \int_\Omega \frac{(u+1)^{p-\eta}}
                             {v^{2k_1+\sigma_3-2}w^{2k_2+\sigma_4}}
                            |\nabla w|^2
        \notag\\
      &\quad\ \,
        +\frac{\ep_{03}A_{15}''}{2}\cdot\ep_{19}^{-\frac{\theta_{19}}{\theta_{19}-1}-1}
          \int_\Omega |\nabla w|^2,
      \\
      I_{20}
      &\le\frac{B_{14}}{2}\cdot\ep_{20}\int_\Omega (u+1)^{p-2}|\nabla u|^2
        +\frac{B_{14}'}{2}\cdot\ep_{20}^{\theta_{20}-1}
          \int_\Omega \frac{(u+1)^{p-\eta}}
                             {w^{2k_2+\sigma_2}}
                            |\nabla w|^2
        \notag\\
      &\quad\ \,
        +\frac{B_{14}''}{2}\cdot\ep_{20}^{-\frac{\theta_{20}}{\theta_{20}-1}-1}
          \int_\Omega |\nabla w|^2.
\end{align}
%

{\bf Step 2.} We estimate the five terms containing $|\nabla v||\nabla w|$ 
(i.e., $I_{11}$--$I_{14}$, $I_{21}$). 
Here we can omit estimates for $I_{11}, I_{12}$ as mentioned above. 
As to an estimate for $I_{13}$, we see that for all $\ep_{13}>0$, 
%
\begin{align}
      I_{13}
      &=B_{10}\int_\Omega 
          \frac{(u+1)^{\frac{p+2r-\eta-4}{2}}}
                 {v^{\frac{2k_1+2\sigma_1-\sigma_3}{2}}w^{-\frac{\sigma_4}{2}}}
                 |\nabla v| \cdot
          \frac{(u+1)^{\frac{p-\eta}{2}}}
                 {v^{\frac{2k_1+\sigma_3-2}{2}}w^{\frac{2k_2+\sigma_4}{2}}}
                 |\nabla w|
      \notag\\
      &\le 
        B_{10}\cdot\frac{1}{2\ep_{13}}\int_\Omega 
        \frac{(u+1)^{p+2r-\eta-4}}{v^{2k_1+2\sigma_1-\sigma_3}w^{-\sigma_4}}
        |\nabla v|^2
        +B_{10} \cdot \frac{\ep_{13}}{2}\int_\Omega
        \frac{(u+1)^{p-\eta}}{v^{2k_1+\sigma_3-2}w^{2k_2+\sigma_4}}
        |\nabla w|^2
        \notag\\
      &\le\frac{B_{10}'}{2}\frac{1}{\ep_{13}}\cdot \ep_{13}^{\theta_{13}}
        \int_\Omega 
        \frac{(u+1)^{p-\eta}}{v^{(2k_1+2\sigma_1-\sigma_3)\theta_{13}}
        w^{-\sigma_4\theta_{13}}}
        |\nabla v|^2
        +\frac{B_{10}''}{2}\frac{1}{\ep_{13}}\cdot \ep_{13}^{-\frac{\theta_{13}}{\theta_{13}-1}}
        \int_\Omega |\nabla v|^2
        \notag\\
      &\quad\ \,
        +\frac{B_{10}}{2} \cdot \ep_{13}\int_\Omega
        \frac{(u+1)^{p-\eta}}{v^{2k_1+\sigma_3-2}w^{2k_2+\sigma_4}}
        |\nabla w|^2
      \notag\\
      &\le\frac{B_{10}'}{2}\cdot \ep_{13}^{\theta_{13}-1}
        \int_\Omega 
        \frac{(u+1)^{p-\eta}}{v^{2k_1+\sigma_1}}
        |\nabla v|^2
        +\frac{B_{10}''}{2}\cdot \ep_{13}^{-\frac{\theta_{13}}{\theta_{13}-1}-1}
        \int_\Omega |\nabla v|^2
        \notag\\
      &\quad\ \,
        +\frac{B_{10}}{2} \cdot \ep_{13}\int_\Omega
        \frac{(u+1)^{p-\eta}}{v^{2k_1+\sigma_3-2}w^{2k_2+\sigma_4}}
        |\nabla w|^2
\end{align}
%
holds, where $\theta_{13}:=\frac{p-\eta}{p+2r-\eta-4}>1$ 
by the condition $r<2$. 
Here we used the facts that 
$(2k_1+2\sigma_1-\sigma_3)\theta_{13} \ge 2k_1+\sigma_1$ and 
$-\sigma_4\theta_{13}>0$ due to 
$\theta_{13}>1, \sigma_1>0, \sigma_3<0$ and $\sigma_4<0$, 
respectively. 
Similarly, we establish that for all $\ep_{14}>0$ there exists 
$\theta_{14}>1$ such that
%
\begin{align}
      I_{14}
      &\le\frac{B_{11}}{2}\cdot \ep_{14}\int_\Omega
        \frac{(u+1)^{p-\eta}}{v^{2k_1+\sigma_3}w^{2k_2+\sigma_4-2}}
        |\nabla v|^2
        \notag\\
      &\quad\ \,+\frac{B_{11}'}{2}\cdot \ep_{14}^{\theta_{14}-1}
        \int_\Omega \frac{(u+1)^{p-\eta}}{w^{2k_2+\sigma_2}}|\nabla w|^2
        +\frac{B_{11}''}{2} \cdot \ep_{14}^{-\frac{\theta_{14}}{\theta_{14}-1}-1}
        \int_\Omega
        |\nabla w|^2.
\end{align}
%
Also, we can derive that for all $\ep_{21}>0$, 
%
\begin{align}
      I_{21}
      &=\ep_{03}A_{12}'\int_\Omega
        \frac{(u+1)^{\frac{p-\eta}{2}}}{v^\frac{2k_1+\sigma_1}{2}}|\nabla v| \cdot
        \frac{(u+1)^{\frac{p-\eta}{2}}}{v^\frac{2k_1-\sigma_1+2\sigma_3-2}{2}
        w^{2k_2+\sigma_4-1}}|\nabla w|
        \notag\\
      &\le\frac{\ep_{03}A_{12}'}{2}\int_\Omega
        \frac{(u+1)^{p-\eta}}{v^{2k_1+\sigma_1}}|\nabla v|^2
        +\frac{\ep_{03}A_{12}'}{2}\int_\Omega
        \frac{(u+1)^{p-\eta}}{v^{2k_1-\sigma_1+2\sigma_3-2}
        w^{4k_2+2\sigma_4-2}}|\nabla w|^2
        \notag\\
      &\le\frac{\ep_{03}A_{12}'}{2}\int_\Omega
        \frac{(u+1)^{p-\eta}}{v^{2k_1+\sigma_1}}|\nabla v|^2
        +\frac{\ep_{03}A_{12}'}{2}\int_\Omega
        \frac{(u+1)^{p-\eta}}{w^{2k_2+\sigma_2}}|\nabla w|^2
\end{align}
%
holds, since $2k_1-\sigma_1+2\sigma_3-2 \ge 0$ and 
$4k_2+2\sigma_4-2 \ge 2k_2+\sigma_2$ due to $\sigma_3>2(1-k_1)$ 
and $\sigma_4>2(1-k_2)$, respectively.

{\bf Step 3.} We estimate the two terms containing $|\nabla w|^2$ 
(i.e., $I_{15}$, $I_{16}$). 
As to an estimate for $I_{15}$, we deduce that for all $\ep_{15}>0$,
%
\begin{align}
      I_{15}
      &\le \ep_{03}A_9 \cdot \ep_{15}^{\theta_{15}}\int_\Omega
      \frac{(u+1)^{p-\eta}}{v^{(2k_1+\sigma_3-2)\theta_{15}}
      w^{(3k_2+\sigma_4-1)\theta_{15}}}|\nabla w|^2
      +\ep_{03}A_9 \cdot \ep_{15}^{-\frac{\theta_{15}}{\theta_{15}-1}}
      \int_\Omega |\nabla w|^2
      \notag\\
      &\le \ep_{03}A_9 \cdot \ep_{15}^{\theta_{15}}\int_\Omega
      \frac{(u+1)^{p-\eta}}{v^{2k_1+\sigma_3-2}
      w^{2k_2+\sigma_4}}|\nabla w|^2
      +\ep_{03}A_9 \cdot \ep_{15}^{-\frac{\theta_{15}}{\theta_{15}-1}}
      \int_\Omega |\nabla w|^2
\end{align}
%
holds, where $\theta_{15}:=\frac{p-\eta}{p+r-\eta-2}>1$ 
by the condition $r<2$. 
Proceeding similarly as the above estimate we obtain that 
for all $\ep_{16}>0$ there exists $\theta_{16}>1$ such that
%
\begin{align}
      I_{16}
      &\le B_{12} \cdot \ep_{16}^{\theta_{16}}\int_\Omega
      \frac{(u+1)^{p-\eta}}{w^{2k_2+\sigma_2}}|\nabla w|^2
      +B_{12} \cdot \ep_{16}^{-\frac{\theta_{16}}{\theta_{16}-1}}
      \int_\Omega |\nabla w|^2.
\end{align}
%

{\bf Step 4.} We estimate the four terms which do not contain 
$|\nabla u|, |\nabla v|, |\nabla w|$ (i.e., $I_{22}$--$I_{25}$) by 
$\int_\Omega (u+1)^p$. 
Indeed, it suffices to note that 
%
\begin{align}\label{estI22}
      \int_\Omega (u+1)^{p-\eta}
      &\le \frac{p-\eta}{p}\int_\Omega (u+1)^p+\frac{\eta}{p}, 
\end{align}
%
which leads to the required estimates. 

\vspace{2mm}
Thus, in view of Steps 1--4, the estimates for 
the twenty-five terms $I_1$--$I_{25}$ in \eqref{DI2-12} are complete. 
We finally derive 
\eqref{DI2}. 
Combining \eqref{DI2-12} and \eqref{estI1}--\eqref{estI22}, 
we have
%
\begin{align*}
    &\quad\,\ep_{03}\frac{d}{dt}\int_\Omega 
      \frac{(u+1)^{p-\eta}}
             {v^{2k_1+\sigma_3-2}w^{2k_2+\sigma_4-2}}
    +\frac{d}{dt}\int_\Omega 
      \frac{(u+1)^{p-\eta}}{v^{2k_1+\sigma_1-2}}
    +\frac{d}{dt}\int_\Omega 
      \frac{(u+1)^{p-\eta}}{w^{2k_2+\sigma_2-2}}\notag\\
    &\quad
    +B_{19}\int_\Omega 
      \frac{(u+1)^{p-\eta}}{v^{2k_1+\sigma_1}}
      |\nabla v|^2
    +B_{20}\int_\Omega 
      \frac{(u+1)^{p-\eta}}{w^{2k_2+\sigma_2}}
      |\nabla w|^2
    \notag\\
    &\quad
    +\ep_{03}A_{11}\int_\Omega 
      \frac{(u+1)^{p-\eta}}{v^{2k_1+\sigma_3}w^{2k_2+\sigma_4-2}}|\nabla v|^2
    +\ep_{03}A_{17}\int_\Omega 
      \frac{(u+1)^{p-\eta}}{v^{2k_1+\sigma_3-2}w^{2k_2+\sigma_4}}
      |\nabla w|^2
    \notag\\
    &\le c_1\int_\Omega 
      \frac{(u+1)^{p-\eta}}{v^{2k_1+\sigma_1}}
      |\nabla v|^2
      +c_2\int_\Omega 
      \frac{(u+1)^{p-\eta}}{w^{2k_2+\sigma_2}}
      |\nabla w|^2
      \notag\\
      &\quad+c_3\int_\Omega 
      \frac{(u+1)^{p-\eta}}{v^{2k_1+\sigma_3}w^{2k_2+\sigma_4-2}}|\nabla v|^2
      +c_4\int_\Omega 
      \frac{(u+1)^{p-\eta}}{v^{2k_1+\sigma_3-2}w^{2k_2+\sigma_4}}
      |\nabla w|^2
      \notag\\
      &\quad 
      +c_5\int_\Omega (u+1)^{p-2}|\nabla u|^2
      +c_6\int_\Omega |\nabla v|^2
      +c_7\int_\Omega |\nabla w|^2
      +c_8\int_\Omega (u+1)^p+c_9,
\end{align*}
%
where
%
\begin{align*}
   c_1&:=\frac{1}{2}(\ep_{03}A_{12}'
            +B_3'\ep_2^{\theta_2-1}
            +B_{10}'\ep_{13}^{\theta_{13}-1}
            +B_{18}'\ep_{18}^{\theta_{18}-1}),
   \notag\\
   c_2&:=\frac{1}{2}(\ep_{03}A_{12}'
            +B_4'\ep_4^{\theta_4-1}
            +B_{11}'\ep_{14}^{\theta_{14}-1}
            +B_{12}\ep_{16}^{\theta_{16}}
            +B_{14}'\ep_{20}^{\theta_{20}-1}),
   \notag\\
   c_3&:=\frac{1}{2}(\ep_{03}A_{4}'\ep_1^{\theta_1-1}
            +\ep_{03}A_{10}'\ep_{17}^{\theta_{17}-1}
            +B_7'\ep_7^{\theta_7-1}
            +B_{11}\ep_{14}),
   \notag\\
   c_4&:=\frac{1}{2}(\ep_{03}A_{7}'\ep_3^{\theta_3-1}
            +\ep_{03}A_{9}'\ep_{15}^{\theta_{15}}
            +\ep_{03}A_{15}'\ep_{19}^{\theta_{19}-1}
            +B_9'\ep_{10}^{\theta_{10}-1}
            +B_{10}\ep_{13}
            ),
   \notag\\
   c_5&:=\frac{1}{2}(
            \ep_{03}A_4\ep_1
            +\ep_{03}A_7\ep_3
            +\ep_{03}A_{10}\ep_{17}
            +\ep_{03}A_{15}\ep_{19}
   \notag\\
   &\qquad\qquad
            +B_3\ep_2
            +B_4\ep_4
            +B_7\ep_7
            +B_9\ep_{10}
            +B_{13}\ep_{18}
            +B_{14}\ep_{20}
            ),
   \notag\\
   c_6&:=\frac{1}{2}(\ep_{03}A_4''\ep_1^{-\frac{\theta_1}{\theta_1-1}-1}
            +\ep_{03}A_{10}''\ep_{17}^{-\frac{\theta_{17}}{\theta_{17}-1}-1}
   \notag\\
   &\qquad\qquad
            +B_3''\ep_2^{-\frac{\theta_2}{\theta_2-1}-1}
            +B_7''\ep_7^{-\frac{\theta_7}{\theta_7-1}-1}
            +B_{10}''\ep_{13}^{-\frac{\theta_{13}}{\theta_{13}-1}-1}
            +B_{13}''\ep_{18}^{-\frac{\theta_{18}}{\theta_{18}-1}-1}
            ),
   \notag\\
   c_7&:=\frac{1}{2}(\ep_{03}A_7''\ep_3^{-\frac{\theta_3}{\theta_3-1}-1}
            +\ep_{03}A_{9}''\ep_{15}^{-\frac{\theta_{15}}{\theta_{15}-1}-1}
            +\ep_{03}A_{15}''\ep_{19}^{-\frac{\theta_{19}}{\theta_{19}-1}-1}
   \notag\\
   &\qquad\qquad
            +B_4''\ep_4^{-\frac{\theta_4}{\theta_4-1}-1}
            +B_9''\ep_{10}^{-\frac{\theta_{10}}{\theta_{10}-1}-1}
            +B_{11}''\ep_{14}^{-\frac{\theta_{14}}{\theta_{14}-1}-1}
            +B_{12}''\ep_{16}^{-\frac{\theta_{16}}{\theta_{16}-1}-1}
            +B_{14}''\ep_{20}^{-\frac{\theta_{20}}{\theta_{20}-1}-1}
            ),
   \notag\\
   c_8&:=\frac{p-\eta}{p}(
            \ep_{03}A_{14}'+\ep_{03}A_{19}'+B_{16}'+B_{18}'
            ),
   \notag\\
   c_9&:=\frac{\eta}{p}(
            \ep_{03}A_{14}'+\ep_{03}A_{19}'+B_{16}'+B_{18}'
            ).
\end{align*}
%
Consequently, by choosing $\ep_{03}, \ep_i>0$ 
($i \in \{1, 2, 3, 4, 7, 10, 13, 14, 15, 16, 17, 18, 19, 20\}$) 
satisfying $B_{19}-c_1>0$, $B_{20}-c_2>0$, $\ep_{03}A_{11}=c_3$, 
$\ep_{03}A_{17}=c_4$ and $c_5=\frac{a_0(p-m)(p-m+1)}{4}$, 
we derive the differential inequality \eqref{DI2}.
\end{proof}

\begin{prth1.1}
Setting $\ep_{01}:=C_3$ and $\ep_{02}:=C_4$ in Lemma~\ref{difineq1}, 
where $C_3, C_4>0$ are constants appearing in Lemma~\ref{difineq2}, 
we know that
\begin{align}\label{pfDI}
&\quad\,\frac{d}{dt}\left(
\int_\Omega (u+1)^{p-m+1}
+\int_\Omega \frac{(u+1)^{p-\eta}}{v^{2k_1+\sigma_1-2}}
+\int_\Omega \frac{(u+1)^{p-\eta}}{w^{2k_2+\sigma_2-2}}
+\ep_{03}\int_\Omega \frac{(u+1)^{p-\eta}}{v^{2k_1+\sigma_3-2}w^{2k_2+\sigma_4-2}}
\right)\notag\\ 
&\qquad\, \quad
+\frac{a_0(p-m)(p-m+1)}{4}\int_\Omega (u+1)^{p-2}|\nabla u|^2\notag\\[2mm]
&\le 
c_1\int_\Omega |\nabla v|^2+c_2\int_\Omega |\nabla w|^2
+c_3\int_\Omega (u+1)^p+c_4
\end{align}
for all $t \in (0, \tmax)$ with some $c_1, c_2, c_3, c_4>0$, $\ep_{03}>0$. 
Also, multiplying the second and third equations in \eqref{P} by 
$v$ and $w$, respectively, and integrating them over $\Omega$, 
we have
\begin{align}
&\frac{d}{dt}\int_\Omega v^2+2d_1\int_\Omega |\nabla v|^2
\le \frac{\alpha^2}{\beta}\int_\Omega u^2-\beta\int_\Omega v^2,
\label{vDI}\\ 
&\frac{d}{dt}\int_\Omega w^2+2d_2\int_\Omega |\nabla w|^2
\le \frac{\gamma^2}{\delta}\int_\Omega u^2-\delta\int_\Omega w^2.
\label{wDI}
\end{align}
Multiplying \eqref{vDI} and \eqref{wDI} by $\frac{c_1}{2d_1}$ and 
$\frac{c_2}{2d_2}$, respectively, and adding them to \eqref{pfDI}, we obtain
\begin{align}\label{pfDI2}
&\quad\,\frac{d}{dt}\left(
\int_\Omega (u+1)^{p-m+1}
+\int_\Omega \frac{(u+1)^{p-\eta}}{v^{2k_1+\sigma_1-2}}
+\int_\Omega \frac{(u+1)^{p-\eta}}{w^{2k_2+\sigma_2-2}}
+\ep_{03}\int_\Omega \frac{(u+1)^{p-\eta}}{v^{2k_1+\sigma_3-2}w^{2k_2+\sigma_4-2}}\right.\notag\\
&\qquad\,\quad \left.+\frac{c_1}{2d_1}\int_\Omega v^2+\frac{c_2}{2d_2}\int_\Omega w^2
\right)+\frac{a_0(p-m)(p-m+1)}{4}\int_\Omega (u+1)^{p-2}|\nabla u|^2
\notag\\[2mm]
&\le  c_3\int_\Omega (u+1)^p+\Big(\frac{c_1\alpha^2}{2d_1\beta}+\frac{c_2\gamma^2}{2d_2\delta}\Big)\int_\Omega u^2
-\frac{c_1\beta}{2d_1}\int_\Omega v^2
-\frac{c_2\delta}{2d_2}\int_\Omega w^2+c_4
\end{align}
for all $t \in (0, \tmax)$. 
By virtue of the Gagliardo--Nirenberg inequality, 
we see that
\begin{align*}
c_3\int_\Omega (u+1)^p 
&=c_3\|(u+1)^{\frac p2}\|_{L^2(\Omega)}^2\notag\\
&\le c_5\Big(\|\nabla(u+1)^{\frac{p}{2}}\|_{L^2(\Omega)}^{\theta_1}
                    \|(u+1)^{\frac p2}\|_{L^{\frac 2p}(\Omega)}^{1-\theta_1}
                +\|(u+1)^{\frac p2}\|_{L^{\frac 2p}(\Omega)}\Big)^2
\end{align*} 
with some $c_5>0$ and $\theta_1:=\frac{pn-n}{pn+2-n} \in (0,1)$. 
Here, noting from the first equation in \eqref{P} that 
the mass conservation $\int_\Omega u(\cdot, t)=\int_\Omega u_0$ holds 
for all $t \in (0, \tmax)$ and using Young's inequality, 
we derive
\begin{align}\label{GN2}
c_3\int_\Omega (u+1)^p 
&\le c_6\Big(\|\nabla(u+1)^{\frac{p}{2}}\|_{L^2(\Omega)}^{\theta_1}+1\Big)^2\notag\\
&\le \frac{a_0(p-m)(p-m+1)}{16}\int_\Omega (u+1)^{p-2}|\nabla u|^2+c_7
\end{align} 
with some $c_6, c_7>0$. 
Also, recalling the lower estimates \eqref{infv} and \eqref{infw}, 
we infer from \eqref{GN2} that 
\begin{align}\label{pfDI5}
&\int_\Omega \frac{(u+1)^{p-\eta}}{v^{2k_1+\sigma_1-2}}
+\int_\Omega \frac{(u+1)^{p-\eta}}{w^{2k_2+\sigma_2-2}}
+\ep_{03}\int_\Omega \frac{(u+1)^{p-\eta}}{v^{2k_1+\sigma_3-2}w^{2k_2+\sigma_4-2}}\notag\\
&\qquad\,
\le \Big(\frac{1}{\mu_1^{2k_1+\sigma_1-2}}
+\frac{1}{\mu_2^{2k_2+\sigma_2-2}}
+\frac{\ep_{03}}{\mu_1^{2k_1+\sigma_1-2}\mu_2^{2k_2+\sigma_2-2}}\Big)\int_\Omega (u+1)^{p-\eta}\notag\\
&\qquad\,\le c_3\int_\Omega (u+1)^p+c_8\notag\\
&\qquad\,\le \frac{a_0(p-m)(p-m+1)}{16}\int_\Omega (u+1)^{p-2}|\nabla u|^2+c_9
\end{align}
with some $c_8, c_9>0$. 
Moreover, we derive from the relation $u^2 \le (u+1)^p$ and \eqref{GN2} that 
\begin{align}\label{pfDI8}
\Big(\frac{c_1\alpha^2}{2d_1\beta}+\frac{c_2\gamma^2}{2d_2\delta}\Big)\int_\Omega u^2
\le\frac{a_0(p-m)(p-m+1)}{16}\int_\Omega (u+1)^{p-2}|\nabla u|^2+c_{10}
\end{align}
with some $c_{10}>0$. 
Collecting \eqref{GN2}--\eqref{pfDI8} in \eqref{pfDI2}, we establish
\begin{align}\label{pfDI9}
&\quad\,\frac{d}{dt}\left(
\int_\Omega (u+1)^{p-m+1}
+\int_\Omega \frac{(u+1)^{p-\eta}}{v^{2k_1+\sigma_1-2}}
+\int_\Omega \frac{(u+1)^{p-\eta}}{w^{2k_2+\sigma_2-2}}
+\ep_{03}\int_\Omega \frac{(u+1)^{p-\eta}}{v^{2k_1+\sigma_3-2}w^{2k_2+\sigma_4-2}}\right.\notag\\
&\qquad\, \left.+\frac{c_1}{2d_1}\int_\Omega v^2+\frac{c_2}{2d_2}\int_\Omega w^2
\right)\notag\\
&\qquad\,+\frac{a_0(p-m)(p-m+1)}{16}\int_\Omega (u+1)^{p-2}|\nabla u|^2
+\int_\Omega \frac{(u+1)^{p-\eta}}{v^{2k_1+\sigma_1-2}}
+\int_\Omega \frac{(u+1)^{p-\eta}}{w^{2k_2+\sigma_2-2}}\notag\\
&\qquad\,+\ep_{03}\int_\Omega \frac{(u+1)^{p-\eta}}{v^{2k_1+\sigma_3-2}w^{2k_2+\sigma_4-2}}+\frac{c_1\beta}{2d_1}\int_\Omega v^2
+\frac{c_2\delta}{2d_2}\int_\Omega w^2\notag\\
&\le  c_{11}
\end{align}
for all $t \in (0, \tmax)$ with some $c_{11}>0$. 
Here we estimate the term $\int_\Omega (u+1)^{p-2}|\nabla u|^2$. 
Again by the Gagliardo--Nirenberg inequality, we have
\begin{align*}
\int_\Omega (u+1)^{p-m+1}
&=\|(u+1)^{\frac p2}\|_{L^{\frac{2(p-m+1)}{p}(\Omega)}}^{\frac{2(p-m+1)}{p}}\notag\\
&\le c_{12}\Big(\|\nabla(u+1)^{\frac{p}{2}}\|_{L^2(\Omega)}^{\theta_2}
                \|(u+1)^{\frac p2}\|_{L^{\frac 2p}(\Omega)}^{1-\theta_2}
                +\|(u+1)^{\frac p2}\|_{L^{\frac 2p}(\Omega)}\Big)^{\frac{2(p-m+1)}{p}}
\end{align*}
with some $c_{12}>0$ and $\theta_2:=\frac{(p-m)pn}{(p-m+1)(pn+2-n)} 
\in (0,1)$ for sufficiently large $p$ fulfilling \eqref{condip2}. 
This together with the mass conservation yields
\begin{align*}
\int_\Omega (u+1)^{p-m+1}
&\le c_{13}\Big(\|\nabla(u+1)^{\frac{p}{2}}\|_{L^2(\Omega)}^{\theta_2}
                +1\Big)^{\frac{2(p-m+1)}{p}}
\end{align*}
with some $c_{13}>0$ and hence
\begin{align}\label{GN3}
c_{14}\Big(\int_\Omega (u+1)^{p-m+1}\Big)^{\theta_3} 
\le \frac{a_0(p-m)(p-m+1)}{16}\int_\Omega (u+1)^{p-2}|\nabla u|^2+c_{15}
\end{align}
with some $c_{14}, c_{15}>0$ and $\theta_3:=\frac{pn+2-n}{(p-m)n}>0$. 
Combining \eqref{GN3} with \eqref{pfDI9}, we obtain
\begin{align}\label{pfDI10}
&\quad\,\frac{d}{dt}\left(
\int_\Omega (u+1)^{p-m+1}
+\int_\Omega \frac{(u+1)^{p-\eta}}{v^{2k_1+\sigma_1-2}}
+\int_\Omega \frac{(u+1)^{p-\eta}}{w^{2k_2+\sigma_2-2}}
+\ep_{03}\int_\Omega \frac{(u+1)^{p-\eta}}{v^{2k_1+\sigma_3-2}w^{2k_2+\sigma_4-2}}\right.\notag\\
&\qquad\, \left.+\frac{c_1}{2d_1}\int_\Omega v^2+\frac{c_2}{2d_2}\int_\Omega w^2
\right)\notag\\
&\qquad\,+c_{14}\Big(\int_\Omega (u+1)^{p-m+1}\Big)^{\theta_3} 
+\int_\Omega \frac{(u+1)^{p-\eta}}{v^{2k_1+\sigma_1-2}}
+\int_\Omega \frac{(u+1)^{p-\eta}}{w^{2k_2+\sigma_2-2}}\notag\\
&\qquad\,+\ep_{03}\int_\Omega \frac{(u+1)^{p-\eta}}{v^{2k_1+\sigma_3-2}w^{2k_2+\sigma_4-2}}+\frac{c_1\beta}{2d_1}\int_\Omega v^2
+\frac{c_2\delta}{2d_2}\int_\Omega w^2\notag\\
&\le  c_{16}
\end{align}
for all $t \in (0, \tmax)$ with some $c_{16}>0$. 
Putting 
\begin{align*}
y(t)&:=\int_\Omega (u(\cdot, t)+1)^{p-m+1}
+\int_\Omega \frac{(u(\cdot, t)+1)^{p-\eta}}{v^{2k_1+\sigma_1-2}(\cdot, t)}
+\int_\Omega \frac{(u(\cdot, t)+1)^{p-\eta}}{w^{2k_2+\sigma_2-2}(\cdot, t)}
\notag\\
&\quad\,
+\ep_{03}\int_\Omega 
\frac{(u(\cdot, t)+1)^{p-\eta}}
       {v^{2k_1+\sigma_3-2}(\cdot, t)w^{2k_2+\sigma_4-2}(\cdot, t)}
+\frac{c_1}{2d_1}\int_\Omega v^2(\cdot, t)+\frac{c_2}{2d_2}\int_\Omega w^2(\cdot, t)\quad {\rm for}\ t>0,
\end{align*}
we see from \eqref{pfDI10} that 
\begin{align*}
y'(t)+c_{17}y^\kappa(t) \le c_{18}
\end{align*}
for all $t \in (0, \tmax)$ with some $c_{17}, c_{18}>0$ and 
$\kappa:=\min\{\theta, 1\}$. 
Thus we have
\begin{align*}
\sup_{t \in [0,\tmax)}\int_\Omega (u(\cdot, t)+1)^{p-m+1}<\infty
\end{align*}
for sufficiently large $p$ satisfying \eqref{condip2}. 
This yields $\sup_{t \in [0, \tmax)}\|u(\cdot, t)\|_{L^\infty(\Omega)}<\infty$ 
(see \cite[Lemma~A.1]{TW-2012})
which leads to $\tmax=\infty$. 
Therefore we arrive at the conclusion. \qed
\end{prth1.1}


\end{document}